\documentclass[12pt,a4paper]{amsart}

\usepackage[T1]{fontenc}
\usepackage[utf8]{inputenc}
\usepackage{amssymb,amsmath,amsfonts,amsthm,mathtools}
\usepackage{enumitem}
\usepackage{xcolor}
\usepackage{hyperref}

\usepackage{nicefrac}

\newtheorem{theorem}{Theorem}[section]
\newtheorem{lemma}[theorem]{Lemma}
\newtheorem{proposition}[theorem]{Proposition}
\newtheorem{corollary}[theorem]{Corollary}

\theoremstyle{definition}
\newtheorem{definition}[theorem]{Definition}
\newtheorem{example}[theorem]{Example}
\newtheorem{remark}[theorem]{Remark}

\DeclareMathOperator{\arcosh}{arcosh}
\DeclareMathOperator{\tr}{tr}
\DeclareMathOperator{\vol}{vol}
\DeclareMathOperator{\Isom}{Isom}

\newcommand{\bH}{\mathbb{H}}
\newcommand{\R}{\mathbb{R}}
\newcommand{\Q}{\mathbb{Q}}
\newcommand{\Z}{\mathbb{Z}}
\newcommand{\C}{\mathbb{C}}
\newcommand{\N}{\mathbb{N}}
\newcommand{\cL}{\mathcal{L}}
\newcommand{\cO}{\mathcal{O}}
\newcommand{\cN}{\mathcal{N}}
\newcommand{\cM}{\mathcal{M}}

\newcommand{\cH}{\mathcal{H}}
\newcommand{\PSL}{\mathrm{PSL}}
\newcommand{\SL}{\mathrm{SL}}
\newcommand{\id}{\mathrm{id}}
\newcommand{\rM}{\mathrm{M}}
\newcommand{\rP}{\mathrm{P}}
\newcommand{\norm}[1]{\lVert #1 \rVert}
\newcommand{\abs}[1]{\lvert #1 \rvert}
\newcommand{\NK}{N_{K\mid\Q}}

\newcommand{\Gal}{\mathrm{Gal}}

\title[Mean multiplicities for semi-arithmetic surfaces]{%
  Exponential Growth of Mean Multiplicities in Length Spectra\\
  of Semi-Arithmetic Surfaces of Arbitrary Arithmetic Dimension}

\author[A. Zuevsky]{A. Zuevsky}
\address{Institute of Mathematics\\
  Czech Academy of Science, Zitna 25\\
  Czech Republic}
\email{zuevsky@yahoo.com}

\keywords{Length spectrum, mean multiplicities, semi-arithmetic surfaces, arithmetic dimension, generalized modular embedding, Fuchsian groups, Schwarz-Pick contraction.}
\subjclass[2020]{Primary 11F72, 30F35; Secondary 20H10, 53C22}	

\begin{document}
\begin{abstract}
We study the exponential growth of mean multiplicities (EGMM) in the
geodesic length spectrum of a semi-arithmetic Fuchsian group $\Gamma$ of
finite covolume and arithmetic dimension $r\geq 1$ admitting a
 generalized modular embedding into $(\pm\bH)^{r-1}$. 
We introduce two new ingredients.
First, a {multi-dimensional Schwarz-Pick contraction lemma}: the generalized
modular embedding $F:\bH\to\bH^{r-1}$, being holomorphic and strictly
contracting with respect to the product Kobayashi metric, satisfies
$\norm{DF_z}_{\mathrm{op}}\leq\sqrt{r-1}\,(1-\delta)$ for a uniform
$\delta=\delta(\Gamma)>0$ and all $z\in\bH$.
Second, a geometry-of-numbers norm-form estimate: Minkowski's theorem
applied to the lattice of algebraic integers in the invariant trace field
$K$ gives $\#(\cL(\Gamma)\cap[N-1,N])\leq CN^{(r-1)^{3/2}(1-\delta)}$
for all $N$; unlike the case $r\leq 2$, this exponent depends 
on $r$.
Combining the two ingredients shows that $\Gamma$ has EGMM whenever
$r\leq 2$, or $r\geq 3$ and $\delta$ satisfies the {strong contraction
condition} $\delta>1-\bigl(\sqrt2\,(r-1)\bigr)^{-1}$, the sharpest
threshold our method gives, obtained by a refined geometry-of-numbers
argument (Proposition \ref{propsharp}) that improves on the cruder
exponent $(r-1)^{3/2}$ obtained directly from the norm form (the two
coincide exactly at $r=3$).
\end{abstract}

\maketitle

\tableofcontents

\section{Introduction}
\label{secintro}
{\it A guide for the reader:} the main theorems of this paper
(Theorems \ref{thmmain-compact} and \ref{thmmain-noncompact} below) are
proved for semi-arithmetic Fuchsian groups of
arithmetic dimension $r\leq 2$, recovering \cite{BCDT25mult}, but for
$r\geq 3$ they require an explicit additional geometric hypothesis, 
the strong contraction condition \eqref{eqstrong-contraction}, whose
non-vacuousness for any specific group of dimension $r\geq 3$ is not 
established here; see Section \ref{secexamples} for a full discussion.
\subsection{Background and motivation}
Let $S=\Gamma\backslash\bH$ be a complete finite-area hyperbolic orbifold,
where $\Gamma<\PSL(2,\R)$ is a Fuchsian group of finite covolume.
The prime geodesic theorem asserts that the number $\cN(\ell)$ of
closed geodesics of length at most $\ell$, counted with multiplicity, satisfies
\begin{equation}\label{eqpgt}
\cN(\ell)\sim\frac{e^\ell}{\ell},\qquad\ell\to\infty.
\end{equation}
A central problem, motivated by connections to quantum chaos, is to understand the 
distribution of geodesic lengths and, in particular, the growth of mean multiplicities. 

Let $0<\ell_1<\ell_2<\ldots$ be the geodesic length set of $S$ 
without multiplicities, with counting function
$\cN'(\ell)=\#\{n:\ell_n\leq\ell\}$.
Denote by $g(\ell_n)$ the multiplicity of $\ell_n$ in the spectrum.
The mean multiplicity $\langle g(\ell)\rangle$ is a continuous function
defined as the average order of $g$ in the sense that
\begin{equation}\label{eqmeanmult}
\cN(\ell)=\sum_{\ell_n\leq\ell}g(\ell_n)
\sim\int_0^\ell\langle g(x)\rangle \; d\cN'(x), 
  \qquad \ell\to\infty,
\end{equation}
where the right-hand side is a Riemann-Stieltjes integral, see
\cite{Bol93_long} for details. 
We say that $S$ has exponential growth of the mean multiplicities (EGMM) if 
\begin{equation}\label{eqEGMM}
\langle g(\ell)\rangle\geq e^{C\ell},  
\end{equation}
for sufficiently large $\ell$, and 
for some constant $C=C(S)>0$.

Aurich and Steiner \cite{AuSt88} first observed experimentally that certain
special surfaces have $\langle g(\ell)\rangle\sim c\;e^{\ell/2}/\ell$.
Bogomolny, Georgeot, Giannoni and Schmit \cite{Bog92} noticed that EGMM
tends to accompany arithmetic surfaces, i.e., those $\Gamma$ that are
arithmetic Fuchsian groups.
A rigorous proof was given by Luo and Sarnak \cite{LuoSarnak94}, who 
established the key bounded clustering property (B-C): 
there is a constant $c(\Gamma)<\infty$ such that
\begin{equation}\label{eqBC}
\#\bigl(\cL(\Gamma)\cap[N-1,N]\bigr)\leq c(\Gamma), 
\end{equation} 
for all $N\in\N$, 
where $\cL(\Gamma)=\{\abs{\tr(\gamma)}:\gamma\in\Gamma\}$.
Bolte \cite{Bol93} gave a detailed derivation including the multiplicative
constant.
Sarnak conjectured that \eqref{eqBC} characterizes arithmetic groups,
subsequently proved in the non-cocompact case by Geninska-Leuzinger
\cite{GenLeu08}.
\subsection{The result of Belolipetsky-Cosac-D\'{o}ria-Teixeira Paula}
The notion of semi-arithmetic Fuchsian groups generalizes arithmeticity:
$\Gamma$ is semi-arithmetic if its invariant trace field $K$ is totally real and
the traces of elements of $\Gamma$ are algebraic integers, 
see Definition \ref{defsemi-arith}.
The arithmetic dimension $r=r(\Gamma)$ is the number of real embeddings
of $K$ under which the trace set is unbounded. Arithmetic groups have $r=1$.
A semi-arithmetic group $\Gamma$ admits a modular embedding if there
exists an equivariant holomorphic or anti-holomorphic map
$F:\bH\to\bH$ intertwining the action of $\Gamma^{(2)}$ on $\bH$ with
its action via the non-identity embedding $\sigma_2$ of $K$.  
Belolipetsky, Cosac, D\'{o}ria and Teixeira Paula \cite{BCDT25mult} proved:

\begin{theorem}[{\cite[Theorem 1.1]{BCDT25mult}}]
\label{thmBCDT}
All semi-arithmetic Fuchsian groups which admit a modular embedding and have
arithmetic dimension at most $2$ have EGMM.
\end{theorem}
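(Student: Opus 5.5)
The plan follows the Luo--Sarnak strategy. I split the claim into a counting statement (a polynomial bound on the number of distinct traces of size about $N$) and a comparison with the prime geodesic theorem \eqref{eqpgt}.

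\emph{Case $r=1$.} Here $\Gamma$ is arithmetic and bounded clustering \eqref{eqBC} holds by \cite{LuoSarnak94}. For $\gamma$ hyperbolic, $\abs{\tr\gamma}=2\cosh(\ell(\gamma)/2)$, so lengths in $[0,\ell]$ correspond to traces in $[2,2\cosh(\ell/2)]$. Summing \eqref{eqBC} over unit windows gives $\cN'(\ell)\ll e^{\ell/2}$. Comparing with $\cN(\ell)\sim e^\ell/\ell$ through \eqref{eqmeanmult}, by partial summation, yields $\langle g(\ell)\rangle\gg e^{\ell/2}/\ell$, which is EGMM.

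\emph{Case $r=2$.} This is the substantive case. Let $t=\tr\gamma\in\cO_K$ with $\abs{t}\in[N-1,N]$, and let $\sigma_2$ be the second unbounded embedding. Among the other real embeddings, the bounded ones give $\abs{\sigma_j(t)}\leq 2$, since those images lie in compact groups. The modular embedding $F$ intertwines $\gamma$ with $\gamma^{\sigma_2}$. Applying the Schwarz--Pick lemma to $F$ (it is holomorphic $\bH\to\bH$, hence distance non-increasing) gives $d(Fz,\gamma^{\sigma_2}Fz)\leq d(z,\gamma z)$, so $\ell(\gamma^{\sigma_2})\leq\ell(\gamma)$. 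In trace form this says $\abs{\sigma_2(t)}\leq\abs{t}$, up to a constant in the elliptic or parabolic cases. Thus $t$ lies in the box
\[
\{x\in K\otimes\R:\ \abs{x_1}\leq N,\ \abs{x_2}\leq N,\ \abs{x_j}\leq 2\ (j\geq 3)\},
\]
which has Euclidean volume $\ll N^2$ in the Minkowski embedding of the lattice $\cO_K$. A lattice-point count then bounds the number of distinct traces of size at most $N$ by $O(N^2)$. Dyadically, this gives $\cN'(\ell)\ll e^{\ell}$, which is too weak.

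\emph{Improving the $r=2$ count.} The bound must be sharpened, and I would do this by exploiting strict contraction. Because $\Gamma$ is not arithmetic, $F$ is not an isometry, so $\abs{\sigma_2(t)}\leq \abs{t}^{1-\delta}$ for some $\delta>0$. This gives count $\ll N^{2-\delta}$, hence $\cN'(\ell)\ll e^{(1-\delta/2)\ell}$. The comparison with $e^\ell/\ell$ then gives $\langle g(\ell)\rangle\gg e^{\delta\ell/2}/\ell$, as required.

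\textbf{Main obstacle.} The hard step is the uniform exponent $\delta$: turning holomorphic non-isometry into a power saving $\ell(\gamma^{\sigma_2})\leq(1-\delta)\ell(\gamma)+O(1)$ for all $\gamma$. Schwarz--Pick only gives pointwise strict contraction, and $\Gamma\backslash\bH$ may be noncompact. I would first try a compactness argument on the unit tangent bundle of a compact core, bounding $\norm{DF}$ by $1-\delta$ on the thick part. In cusp neighbourhoods I would control $F$ using the explicit behaviour of modular embeddings at cusps, as for triangle groups. I would then integrate along closed geodesics, making sure the time spent in the cusps contributes only boundedly or at a rate still strictly below $1$.
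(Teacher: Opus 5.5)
For cocompact $\Gamma$ your plan is sound and is essentially the paper's argument. The uniform $\delta$ comes from strict Schwarz--Pick plus compactness, as in the first step of the proof of Lemma \ref{lemmulti-SP}. Your box count in the Minkowski image of $\cO_K$ gives the same $\cN'(\ell)\ll e^{(1-\delta/2)\ell}$ as the paper's separation $1\le\abs{\NK(t-s)}\ll\abs{t-s}\,N^{1-\delta}$ on unit windows. Since the box is thin, bound the count by $\vol(B+\mathcal F)/\mathrm{covol}$ for a bounded fundamental domain $\mathcal F$, not by the bare volume. One repair is needed: $F$ only intertwines $\Gamma^{(2)}$, and only $\tr(\gamma)^2$ need lie in $\cO_K$. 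So apply the contraction to $\gamma^2$, and run the count separately on each of the finitely many classes $t\in\sqrt{e}\,K$. This costs nothing in the exponent, unlike the factor $k$ in Lemma \ref{lemgon-general}.

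The genuine gap is the non-cocompact case. The uniform bound $\ell(\gamma^{\sigma_2})\le(1-\delta)\ell(\gamma)+O(1)$ that you single out as the main obstacle is false as soon as $\Gamma$ has a cusp, so no analysis of $F$ near the cusps can deliver it.
\begin{itemize}
\item Take $p\in\Gamma^{(2)}$ parabolic, lifted with $\tr p=2$, and $h\in\Gamma^{(2)}$ not fixing its cusp. Since $(p-I)^2=0$, we have $p^n=I+n(p-I)$.
\item Hence $\tr(p^nh)=\tr h+ns$ with $0\neq s=\tr(ph)-\tr h\in K$, and $\sigma_2(\tr(p^nh))=\sigma_2(\tr h)+n\,\sigma_2(s)$.
\item So $\abs{\sigma_2(\tr(p^nh))}/\abs{\tr(p^nh)}\to\abs{\sigma_2(s)/s}>0$. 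Thus $\ell((p^nh)^{\sigma_2})-\ell(p^nh)$ stays bounded while both lengths tend to infinity.
\end{itemize}
Concretely, take the Hecke group $\Delta(2,5,\infty)=\langle S,T\rangle$ with $Sz=-1/z$, $Tz=z+\phi$, $\phi=(1+\sqrt5)/2$. The elements $T^{2k+1}S\in\Gamma^{(2)}$ have trace $(2k+1)\phi$ and conjugate trace $-(2k+1)/\phi$. Geometrically, $F$ carries each parabolic to a parabolic and is asymptotically affine up the cusp, so $\abs{DF}\to1$ there. Long cusp excursions are exactly where contraction is lost.

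What you need instead is an exceptional set.
\begin{itemize}
\item Integrating along a fundamental segment of the axis gives $\ell(\gamma^{\sigma_2})\le\int\abs{DF}\,ds\le\ell(\gamma)-\delta_Y\,\ell_{\mathrm{thick}}(\gamma)$. Here $1-\delta_Y=\sup\abs{DF}<1$ on the part of the surface below height $Y$ in the cusps.
\item Consequently every $\gamma$ whose trace violates $\abs{\sigma_2(t)}\le 2\abs{t}^{1-\delta_Y/2}$ spends at least half its length above height $Y$.
\item Your box count handles all other traces.
\item For the exceptional ones you need an escape-of-mass (entropy in the cusp) estimate. It must bound by $O(e^{(1-\eta)\ell})$ the number of closed geodesics of length at most $\ell$ spending half their length that deep in the cusps. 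This is a genuinely additional ingredient.
\end{itemize}
Do not expect to borrow this step from the paper. Its route, the equivariant Kirszbraun extension of Lemmas \ref{lemkirszbraun-r} and \ref{lemnoncompact-displ}, asserts precisely the uniform inequality refuted above. Indeed, Theorem A of \cite{LangSchr97} needs a single $\kappa$ with the domain of curvature $\ge\kappa$ and the target of curvature $\le\kappa$. No such $\kappa$ exists for $\Lambda$-Lipschitz maps from $\bH$ into $\bH\times\bH$, nor for maps $\bH\to\bH$ with $\Lambda<1$.
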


The proof rests on two important points:
(i) the Schwarz-Pick lemma applied to the modular embedding $F:\bH\to\pm\bH$
gives $\abs{\sigma_2(\tr\gamma)}<2\abs{\tr\gamma}^{1-\delta}$ for a
uniform $\delta>0$; and
(ii) this bound, together with the norm form $\NK$, forces
$\#(\cL(\Gamma)\cap[N-1,N])\leq CN^{1-\delta}$,
which gives $\cN'(\ell)\leq C'e^{(1-\delta)\ell}$, and EGMM follows
from \eqref{eqpgt}-\eqref{eqmeanmult}.

The authors note explicitly that their method breaks for $r\geq 3$:
when there are $r-1\geq 2$ non-trivial embeddings $\sigma_2$, $\ldots$, $\sigma_r$ 
all producing unbounded trace images, the norm bound on $\NK(\tr\gamma-\tr\beta)$
does not give better-than-linear control on the number of distinct traces.
The paper concludes:
``\ldots there is a natural question that remains open: What can we say about
  semi-arithmetic groups of arithmetic dimension bigger than $2$?''
\subsection{Main results of this paper}
We identify the precise geometric hypothesis that governs this question for
arbitrary $r$, and show that it holds automatically for $r\leq 2$ and remains
a checkable condition for $r\geq 3$.
The appropriate generalization of the modular embedding to arbitrary
arithmetic dimension was introduced, following Schmutz Schaller-Wolfart
\cite{SW00} and McMullen \cite{McMullen03}, and is recalled in 
Definition \ref{defgen-modular} below.
If $r=1$, $\Gamma$ is arithmetic and EGMM is classical
(Luo-Sarnak \cite{LuoSarnak94}, Bolte \cite{Bol93}). For $r\geq 2$, a
generalized modular embedding produces a Schwarz-Pick contraction constant
$\delta=\delta(\Gamma)\in(0,1)$ as in Lemma \ref{lemmulti-SP} below. For
$r\geq 3$ we say that $\Gamma$ satisfies the {strong contraction condition} if
\begin{equation}
\label{eqstrong-contraction}
\delta(\Gamma) \;>\; 1-\frac{1}{\sqrt2\,(r-1)}.
\end{equation}
This is the sharpened threshold furnished by Proposition
\ref{propsharp} below.  The cruder geometry-of-numbers estimate of
Lemma \ref{lemgon} alone would only give $\delta(\Gamma)>1-(r-1)^{-3/2}$,
which coincides with \eqref{eqstrong-contraction} exactly at $r=3$ but is
strictly harder to satisfy for $r\geq 4$ - see the remark following
Proposition \ref{propsharp}. No such condition is needed for $r\leq 2$:
$\delta(\Gamma)>0$ always holds once $\Gamma$ is non-arithmetic
(part 1) of the proof of Lemma \ref{lemmulti-SP}), and this alone already
suffices to give EGMM at $r\leq 2$, as part 1) of the proof of
Theorem \ref{thmmain-compact} shows directly.

Our main Theorem in cocompact case and  conditional for $r\geq 3$ is 
\begin{theorem} 
\label{thmmain-compact}
Let $\Gamma$ be a cocompact semi-arithmetic Fuchsian group of arithmetic
dimension $r\geq 1$, derived from a quaternion algebra, which admits a
generalized modular embedding. If $r\leq 2$, or if $r\geq 3$ and $\Gamma$
satisfies the strong contraction condition \eqref{eqstrong-contraction},
then $\Gamma$ has EGMM.
\end{theorem}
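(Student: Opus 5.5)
The plan is to reduce EGMM to a bound on the number of distinct lengths, and to get that bound by counting traces per unit interval; the prime geodesic theorem then supplies the exponential gap. Since $\ell(\gamma)=2\arcosh(\abs{\tr\gamma}/2)$, distinct lengths up to $\ell$ correspond to distinct values of $\abs{\tr\gamma}$ up to roughly $N=e^{\ell/2}$. Hence any estimate
\[
\#\bigl(\cL(\Gamma)\cap[N-1,N]\bigr)\leq CN^{\theta}
\]
sums over $N\leq e^{\ell/2}$ to give
\[
\cN'(\ell)\leq C'e^{(1+\theta)\ell/2}.
\]
If $\theta<1$, the exponent is strictly below $\ell$. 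Comparing with \eqref{eqpgt} through \eqref{eqmeanmult} (integrating by parts in the Riemann--Stieltjes integral, as in Bolte \cite{Bol93_long} and \cite{BCDT25mult}) then yields
\[
\langle g(\ell)\rangle\geq e^{C\ell},\qquad C=\tfrac{1-\theta}{2}-\varepsilon>0 .
\]
So the whole proof is to show $\theta<1$ under the stated hypotheses.

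\emph{Case $r=1$.} Here $\Gamma$ is arithmetic, and \eqref{eqBC} of Luo--Sarnak gives $\theta=0$.

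\emph{Case $r=2$.} Lemma \ref{lemmulti-SP} provides $\delta>0$, because a non-arithmetic group gives a genuinely contracting $F$. Integrating the derivative bound along the axis of $\gamma$ shows that the translation length of $\sigma_2(\gamma)$ is at most $(1-\delta)\ell(\gamma)$, so
\[
\abs{\sigma_2(\tr\gamma)}\leq 2\abs{\tr\gamma}^{1-\delta}.
\]
Now take two distinct traces $t,t'$ in $[N-1,N]$. Their difference $t-t'$ is a nonzero algebraic integer, so $\abs{\NK(t-t')}\geq1$. The embeddings with bounded trace image contribute bounded factors to this norm. This forces the traces to be separated by a gap of order $N^{-(1-\delta)}$, which gives $\theta=1-\delta$. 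This is part 1) of the argument and recovers Theorem \ref{thmBCDT}.

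\emph{Case $r\geq3$.} The same integration of Lemma \ref{lemmulti-SP}, now with operator norm at most $\sqrt{r-1}(1-\delta)$, bounds each $\abs{\sigma_j(\tr\gamma)}$ for $2\le j\le r$ by a power of $N$. Feeding these bounds into the Minkowski lattice count of Lemma \ref{lemgon} gives the crude exponent $\theta=(r-1)^{3/2}(1-\delta)$. I would then replace this with the sharper exponent of Proposition \ref{propsharp}, presumably $\theta=\sqrt2(r-1)(1-\delta)$. With that exponent, the strong contraction condition \eqref{eqstrong-contraction} is precisely the inequality $\theta<1$, and the reduction in the first paragraph finishes the proof.

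The cocompact and quaternion-algebra hypotheses enter in two places. First, they guarantee that all elements are hyperbolic, so the trace-to-length dictionary is clean. Second, they make $\tr\Gamma^{(2)}$ lie in the ring of integers $\cO_K$, so the lattice and norm arguments apply. Passing between $\Gamma$ and $\Gamma^{(2)}$ changes the length set only by a bounded factor, via squaring, so it does not affect the exponent.

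The main obstacle I expect is the translation from the pointwise derivative bound to a trace bound for $\sigma_j(\gamma)$ when $r\geq3$. The operator norm controls the product-metric displacement, not each factor separately. I would first try bounding each coordinate's displacement by the full product displacement, which costs the $\sqrt{r-1}$ factor. I would then check that the exponent in Proposition \ref{propsharp} is compatible with exactly this per-coordinate loss, so that the threshold in \eqref{eqstrong-contraction} matches $\theta<1$ without further slack.
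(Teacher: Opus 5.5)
Your outline is the paper's proof: the same case split, Lemma \ref{lemmulti-SP} to get $\delta>0$, the norm-form separation summed into Corollary \ref{corcount-gen} (for $r=2$) or Corollary \ref{corcount-sharp} (for $r\geq3$), and then $\cN'(\ell)\le C'e^{(1+\theta)\ell/2}$ compared with the prime geodesic theorem, with $\varepsilon=(1-\theta)/2$. The gap is in the step you single out as the main obstacle. The route you describe there cannot give the exponent you need.

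If you bound each coordinate separately, $\ell(\gamma^{\sigma_j})\le\sqrt{r-1}(1-\delta)\ell(\gamma)$, and then multiply the $r-1$ factors $\abs{\sigma_j(t-s)}\le 4T^{\sqrt{r-1}(1-\delta)}$, you get exactly the crude exponent $(r-1)^{3/2}(1-\delta)$ of Lemma \ref{lemgon}, and nothing better. For $r\ge4$ this is strictly larger than $\sqrt2(r-1)(1-\delta)$. Under \eqref{eqstrong-contraction} alone it is only bounded by $\sqrt{(r-1)/2}>1$, so it can exceed $1$. Your proposed ``compatibility check'' would therefore fail. Carried out as you describe, your argument proves the theorem only under $\delta>1-(r-1)^{-3/2}$, which is strictly stronger than the stated hypothesis for $r\ge4$; the per-coordinate route agrees with the sharpened one only for $r\le3$.

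The sharpened exponent needs the sum-of-squares bound \eqref{eqsum-displ} applied to both traces at once. Let $\ell_j,\ell'_j$ be the conjugate displacements of the elements with traces $t,s\le T$, and let $\ell,\ell'$ be their displacements, so $\ell,\ell'\le2\log T$. Then:
\begin{itemize}
\item $\abs{\sigma_j(t-s)}\le 4e^{\max(\ell_j,\ell'_j)/2}$;
\item $\max(\ell_j,\ell'_j)^2\le\ell_j^2+(\ell'_j)^2$;
\item by Cauchy--Schwarz, $\sum_{j}\max(\ell_j,\ell'_j)\le(r-1)(1-\delta)\sqrt{\ell^2+(\ell')^2}\le2\sqrt2\,(r-1)(1-\delta)\log T$.
\end{itemize}
This is Proposition \ref{propsharp}. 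It never uses a per-coordinate bound.

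Two smaller points. First, cocompactness does not make every element hyperbolic; triangle groups such as $\Delta(2,3,19)$ have torsion. Elliptic traces are harmless only because they lie below $2$. The real role of cocompactness is in Lemma \ref{lemmulti-SP}: it makes the continuous $\Gamma^{(2)}$-invariant function $z\mapsto\norm{DF_z}_{\mathrm{op}}$ attain its supremum on a compact set, which gives a uniform $\delta>0$.

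Second, the passage from $\Gamma^{(2)}$ to $\Gamma$ cannot be done by transferring counts through squaring. The map $t\mapsto t^2-2$ sends $[2,T]$ into $[2,T^2]$ and doubles the exponent, which destroys $\theta<1$. What is invariant under $\gamma\mapsto\gamma^2$ is the ratio $\ell(\gamma^{\sigma_i})/\ell(\gamma)$. Since $\Gamma$ is derived from a quaternion algebra, $\gamma^{\sigma_i}$ is defined on all of $\Gamma$, so \eqref{eqsum-displ} holds for every hyperbolic $\gamma\in\Gamma$. Moreover $\cL(\Gamma)\subseteq\cO_K$, so the norm argument runs on $\cL(\Gamma)$ directly.
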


Our main Theorem in non-cocompact case and conditional for $r\geq 3$ is 
\begin{theorem}
\label{thmmain-noncompact}
Let $\Gamma$ be a non-cocompact semi-arithmetic Fuchsian group of finite
covolume and arithmetic dimension $r\geq 1$, derived from a quaternion
algebra, which admits a generalized modular embedding. If $r\leq 2$, or if
$r\geq 3$ and $\Gamma$ satisfies the strong contraction condition
\eqref{eqstrong-contraction} (with $\delta$ the compact-core contraction
constant $\delta_\Omega$ of Lemma \ref{lemnoncompact-displ}), then $\Gamma$
has EGMM.
\end{theorem}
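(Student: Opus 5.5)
The strategy is the one used for Theorem \ref{thmmain-compact}, with the compactness input replaced by the compact-core estimate of Lemma \ref{lemnoncompact-displ}. There are three steps: a Galois-conjugate trace bound, a trace-counting bound, and the deduction of EGMM from the counting bound.

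\textbf{Step 1: conjugate trace bound.} For $r=1$ the group is arithmetic. In that case the bounded clustering property \eqref{eqBC} of Luo--Sarnak \cite{LuoSarnak94} holds, and it yields EGMM directly, as in \cite{Bol93}. So assume $r\geq 2$ and fix the generalized modular embedding $F=(F_2,\dots,F_r)\colon\bH\to\bH^{r-1}$. The aim is to show that for every hyperbolic $\gamma\in\Gamma$ and each $j=2,\dots,r$,
\[
\abs{\sigma_j(\tr\gamma)}\leq 2\abs{\tr\gamma}^{1-\delta_\Omega},
\]
with a uniform constant.

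Compare translation lengths: the displacement of $\sigma_j(\gamma)$ is at most the displacement of $F_j(z)$ under $\sigma_j(\gamma)$, where $z$ lies on the axis of $\gamma$. By equivariance this equals $d(F_j(z),F_j(\gamma z))$.

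The obstacle is that the axis may travel arbitrarily far into a cusp, where no uniform contraction is available. Lemma \ref{lemnoncompact-displ} is meant to handle this. I would split the closed geodesic into its portion inside the compact core $\Omega$ and its excursions into cusp neighbourhoods. On the core portion, integrate the infinitesimal bound of Lemma \ref{lemmulti-SP}, which gives a factor $(1-\delta_\Omega)$. On the cusp excursions, use the plain Schwarz--Pick bound (factor $1$). Parabolic elements are fixed by the embedding, so the cusp pieces contribute only boundedly beyond their core entry and exit. This yields
\[
\ell(\sigma_j\gamma)\leq(1-\delta_\Omega)\ell(\gamma)+C.
\]
I expect this step to be the main difficulty. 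It needs the excursion lengths to be controlled by a bounded additive error, or else to be absorbed into the core portion; this is precisely what $\delta_\Omega$ in Lemma \ref{lemnoncompact-displ} is designed to encode, and I would take that lemma as giving the displacement inequality directly.

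\textbf{Step 2: counting traces.} Because the cusps are parabolic elements with trace $\pm2$, traces remain algebraic integers of $K$. This is where the hypothesis that $\Gamma$ is derived from a quaternion algebra is used: it guarantees integrality and integral traces in the non-cocompact setting too. Take distinct $t,t'\in\cL(\Gamma)\cap[N-1,N]$. Then $\NK(t-t')$ is a nonzero integer, so its absolute value is at least $1$. The first factor of the norm is at most $1$ in absolute value, so some conjugate factor $\abs{\sigma_j(t-t')}$ must be large. For $r\leq 2$ this is the Belolipetsky--Cosac--D\'oria--Teixeira Paula argument \cite{BCDT25mult}: it gives the separation $\abs{t-t'}\gg N^{-(1-\delta_\Omega)}$, hence
\[
\#\bigl(\cL(\Gamma)\cap[N-1,N]\bigr)\leq CN^{1-\delta_\Omega}.
\]
For $r\geq 3$, the traces lie in the box $\abs{\sigma_j(t)}\leq 2N^{1-\delta_\Omega}$ of the Minkowski lattice $\cO_K\subset\R^r$. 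Lemma \ref{lemgon}, sharpened by Proposition \ref{propsharp}, then bounds the count by $CN^{\sqrt2(r-1)(1-\delta_\Omega)}$. The strong contraction condition \eqref{eqstrong-contraction} makes this exponent $\kappa:=\sqrt2(r-1)(1-\delta_\Omega)<1$.

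\textbf{Step 3: EGMM.} Since $\tr\gamma=2\cosh(\ell/2)$, summing the count over $N\leq 2\cosh(\ell/2)$ gives
\[
\cN'(\ell)\leq C'e^{(1+\kappa)\ell/2}.
\]
This is exponentially smaller than $\cN(\ell)\sim e^\ell/\ell$ from \eqref{eqpgt}; here the prime geodesic theorem is used for finite-area orbifolds, so it still holds in the non-cocompact case. Feeding these two estimates into \eqref{eqmeanmult} and integrating by parts, exactly as in the cocompact proof, gives
\[
\langle g(\ell)\rangle\geq e^{C\ell}\quad\text{for any }C<\tfrac{1-\kappa}{2},
\]
which is EGMM.
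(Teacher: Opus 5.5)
\textbf{There is a genuine gap in Step 1.} You correctly locate the difficulty there. However, the inequality $\ell(\gamma^{\sigma_j})\le(1-\delta_\Omega)\ell(\gamma)+C$ is false, not merely unproved.

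Done carefully, your core/cusp decomposition integrates the hyperbolic derivative of the $\sigma_j$-component of $F$ along the axis. It yields only $\ell(\gamma^{\sigma_j})\le c\,\ell_{\mathrm{core}}(\gamma)+\ell_{\mathrm{cusp}}(\gamma)$, where $c$ is the supremum of that derivative over the core. The operator-norm bound of Lemma \ref{lemmulti-SP} gives only $c\le\sqrt{r-1}(1-\delta_\Omega)$, not $1-\delta_\Omega$.

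Nothing bounds $\ell_{\mathrm{cusp}}(\gamma)$ by a constant or by a multiple of the core part. Parabolics are not ``fixed'' by the embedding: $p^{\sigma_j}$ is again a nontrivial parabolic, so cusp excursions are essentially not shortened, and a single excursion can carry almost all of $\ell(\gamma)$.

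Here is a concrete instance. Take the Hecke group $\langle S,T\rangle$ of signature $(2,5,\infty)$: it is non-cocompact, has $K=\Q(\sqrt5)$ and $r=2$, and has a modular embedding by Cohen--Wolfart. Let $T$ be translation by $\lambda=2\cos(\pi/5)$ and $S$ the involution $z\mapsto-1/z$. The elements $\gamma_n=T^{2n}\,ST^2S^{-1}\in\Gamma^{(2)}$ have $\tr\gamma_n=2-4n\lambda^2$ and $\sigma_2(\tr\gamma_n)=2-2n(3-\sqrt5)$. Hence $\ell(\gamma_n)-\ell(\gamma_n^{\sigma_2})$ stays bounded and $\ell(\gamma_n^{\sigma_2})/\ell(\gamma_n)\to1$.

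The phenomenon is general. Let $p$ be parabolic and $h$ hyperbolic in $\Gamma^{(2)}$ with no common fixed point. Then $\tr(p^nh)$ and each $\sigma_j(\tr(p^nh))$ are affine in $n$ with nonzero slope. The reason is that $p^{\sigma_j}$ is parabolic and cannot share a fixed point with $h^{\sigma_j}$; otherwise $\sigma_j(\tr[p,h])=2$, hence $\tr[p,h]=2$, which is impossible in a discrete group.

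\textbf{Falling back on Lemma \ref{lemnoncompact-displ} does not close the gap.} Your fallback is to take that lemma as given and rerun the cocompact argument: Lemma \ref{lemgon}, Proposition \ref{propsharp}, the bound on $\cN'(\ell)$, and the prime geodesic theorem. This is exactly the paper's proof, but it fails for two reasons.

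First, that lemma gives only the sum-of-squares bound \eqref{eqsum-displ} with $\delta_\Omega$, hence $\ell(\gamma^{\sigma_j})\le\sqrt{r-1}(1-\delta_\Omega)\ell(\gamma)$ per coordinate. So the box $\abs{\sigma_j(t)}\le 2N^{1-\delta_\Omega}$ in your Step 2 is not available. Were it available, the norm form would give exponent $(r-1)(1-\delta_\Omega)$; the exponent $\sqrt2(r-1)(1-\delta_\Omega)$ is what Proposition \ref{propsharp} extracts from \eqref{eqsum-displ}.

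Second, and decisively, the family $p^nh$ satisfies $\sum_{i=2}^r\ell((p^nh)^{\sigma_i})^2/\ell(p^nh)^2\to r-1$, so it contradicts Lemma \ref{lemnoncompact-displ} as well. The failing step in that lemma's proof is the claim that $\widetilde G|_{\widetilde\Omega}$ is $\Lambda$-Lipschitz for $d_\bH$. The operator-norm bound controls only the path metric of $\widetilde\Omega$, which is not convex. For $a$ on a boundary horocycle stabilized by $p$, one has $d_\bH(a,p^na)\sim 2\log n$ but $d(\widetilde G(a),\widetilde G(p^na))\sim 2\sqrt r\log n$. So the Lipschitz constant is at least $\sqrt r>\Lambda$.

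What is missing, both in your proposal and in the route through that lemma, is an argument that tolerates cusp excursions. For instance:
\begin{enumerate}
\item Use the honest bound $\ell(\gamma^{\sigma_j})\le c\,\ell_{\mathrm{core}}+\ell_{\mathrm{cusp}}$ only for traces of geodesics whose core fraction is bounded below, where a norm-form count can give a power saving.
\item Prove separately that closed geodesics of length at most $\ell$ spending most of their length deep in the cusps number $O(e^{(1-\eta)\ell})$ for some $\eta>0$.
\end{enumerate}

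A minor point: the quaternion-algebra hypothesis is what places $\cL(\Gamma)$ inside $K$. Integrality of traces is already part of semi-arithmeticity.
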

For $r\leq 2$ both theorems are unconditional; for $r\geq 3$ the strong
contraction condition \eqref{eqstrong-contraction} is a hypothesis 
sufficient for our method to produce EGMM, and is not claimed to be
necessary: a semi-arithmetic group failing \eqref{eqstrong-contraction}
might still have EGMM, just not by the present argument.

Theorems \ref{thmmain-compact} and \ref{thmmain-noncompact} are stated for
$\Gamma$ derived from a quaternion algebra, which guarantees
$\cL(\Gamma)\subseteq K$ and lets us invoke Lemma \ref{lemgon} directly.
Lemma \ref{lemgon-general} below removes this assumption, at the cost of a
possibly stronger hypothesis depending on the degree
$k=k(\Gamma):=[\Q(\cL(\Gamma)):K]\in\{1,2\}$, giving the following fully
general statement.

\begin{corollary}
\label{corunified}
Let $\Gamma$ be any semi-arithmetic Fuchsian group of finite covolume
admitting a generalized modular embedding, and set
$k=k(\Gamma):=[\Q(\cL(\Gamma)):K]\in\{1,2\}$ (Lemma \ref{lemgon-general}).
If $r\leq 2$ and $\delta(\Gamma)>1-1/k$, or if $r\geq 3$ and
\begin{equation}
\label{eqstrong-contraction-gen}
\delta(\Gamma)\;>\;1-\frac{1}{k\sqrt2\,(r-1)},
\end{equation}
then $\Gamma$ has EGMM. At $k=1$ - in particular whenever $\Gamma$ is
derived from a quaternion algebra - the $r\leq2$ condition reduces to
$\delta(\Gamma)>0$, automatic by Lemma \ref{lemmulti-SP}, recovering
Theorem \ref{thmBCDT}, and \eqref{eqstrong-contraction-gen}
coincides with the strong contraction condition \eqref{eqstrong-contraction}. 
 Both are strictly stronger when $k=2$, even at $r\leq2$. Verifying
\eqref{eqstrong-contraction-gen} reduces to a finite computation on the
families of examples in Section \ref{secexamples}, which we do not carry
out.
\end{corollary}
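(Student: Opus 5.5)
The plan follows the template of the proofs of Theorems \ref{thmmain-compact} and \ref{thmmain-noncompact}. The only new feature is that traces may lie in a quadratic extension of $K$.

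\textbf{Step 1: counting bound.} First I would invoke Lemma \ref{lemgon-general} to get a bound on $\#(\cL(\Gamma)\cap[N-1,N])$ valid for arbitrary semi-arithmetic $\Gamma$. By Lemma \ref{lemmulti-SP}, the generalized modular embedding $F$ has uniform contraction $\delta=\delta(\Gamma)$. So for every non-trivial embedding $\sigma_j$ of $K$ we have $\abs{\sigma_j(\tr\gamma)}\leq 2\abs{\tr\gamma}^{1-\delta}$, and the same holds for the extensions of $\sigma_j$ to $L=\Q(\cL(\Gamma))$. Since $[L:K]=k$, the lattice of integers of $L$ has rank $k[K:\Q]$. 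The norm-form/Minkowski argument then counts integer points in a box with $k$ times as many conjugate directions of size $N^{1-\delta}$. Following the refined argument of Proposition \ref{propsharp} in place of the crude Lemma \ref{lemgon}, I expect
\[
\#\bigl(\cL(\Gamma)\cap[N-1,N]\bigr)\leq C\,N^{\theta},
\qquad
\theta=\begin{cases} k(1-\delta), & r\leq 2,\\ k\sqrt2\,(r-1)(1-\delta), & r\geq 3.\end{cases}
\]
This is exactly the bookkeeping of Lemma \ref{lemgon-general} combined with Proposition \ref{propsharp}, which I take as given.

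\textbf{Step 2: from the counting bound to EGMM.} The hypotheses, namely $\delta>1-1/k$ for $r\le2$ and \eqref{eqstrong-contraction-gen} for $r\ge3$, are precisely $\theta<1$. Since $\ell=2\arcosh(\abs{\tr\gamma}/2)$, summing Step 1 over $N\leq 2\cosh(\ell/2)$ gives
\[
\cN'(\ell)\leq C' e^{(1+\theta)\ell/2}.
\]
I would then combine this with the prime geodesic theorem \eqref{eqpgt} and the definition \eqref{eqmeanmult}, as in part 1) of the proof of Theorem \ref{thmmain-compact}. That yields $\langle g(\ell)\rangle\gg e^{(1-\theta)\ell/2}/\ell$ for large $\ell$, which is EGMM with any $C<(1-\theta)/2$.

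In the non-cocompact case, I would replace $\delta$ by $\delta_\Omega$ from Lemma \ref{lemnoncompact-displ}, exactly as in Theorem \ref{thmmain-noncompact}. Parabolic elements have trace $\pm2$ and contribute nothing to the length spectrum.

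\textbf{Step 3: the special case $k=1$.} When $k=1$, and in particular for $\Gamma$ derived from a quaternion algebra (where $\cL(\Gamma)\subseteq K$), the two conditions become $\delta>0$ and \eqref{eqstrong-contraction}. The first is automatic by Lemma \ref{lemmulti-SP}, which recovers Theorem \ref{thmBCDT} and Theorems \ref{thmmain-compact}--\ref{thmmain-noncompact}. For $k=2$ both thresholds are strictly larger, which is an immediate comparison.

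\textbf{Main obstacle.} The expected difficulty is Step 1 for $k=2$. One must check that the contraction bound controls every conjugate of $\tr\gamma$ in $L$, not only those restricting to $\sigma_2,\dots,\sigma_r$ on $K$. For this I would use that $(\tr\gamma)^2=\tr(\gamma^2)\in K$. Every conjugate of $\tr\gamma$ is then $\pm$ a square root of a conjugate of $\tr(\gamma^2)$, and so is bounded by the $K$-estimate; the embeddings where the trace set is bounded contribute only $O(1)$. With this, the lattice-point count in dimension $k[K:\Q]$ goes through and yields the factor $k$ in the exponent.
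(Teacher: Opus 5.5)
You follow the paper's route: Lemma \ref{lemgon-general} with the sharpening of Proposition \ref{propsharp}, then the count of $\cN'(\ell)$ and the prime geodesic theorem as in the proof of Theorem \ref{thmmain-compact}. You also correctly single out the $k=2$ bookkeeping as the delicate step. However, the resolution you sketch does not work, and the paper's own proof of Lemma \ref{lemgon-general} has the same hole.

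For $k=2$ write $L=K(x)$ with $x^2\in K$. Then $\sigma_1=\mathrm{id}$ has a second extension $\tau_{1,2}$ to $L$, namely $x\mapsto -x$. Your own observation $t^2\in K$ gives $\tau_{1,2}(t)=\pm t$ for every $t\in\cL(\Gamma)$. So this conjugate direction has size $N$, not $N^{1-\delta}$: the ``$K$-estimate'' at $\sigma_1$ is no contraction at all. Consequently, in $N_{L\mid\Q}(t-s)$ the factor $\abs{\tau_{1,2}(t-s)}$ equals $t+s\approx 2N$ whenever exactly one of $t,s$ lies in $K$. Keeping this factor only yields $\abs{t-s}\gg N^{-1-\theta}$, i.e.\ a clustering exponent $1+\theta>1$, which gives nothing. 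The paper's displayed bound $\abs{t-s}\cdot(4^{r-1}N^{(r-1)^{3/2}(1-\delta)})^k\cdot 4^{n-rk}$ contains only $n-k+1$ of the $n$ factors of the norm; this is the one that is dropped. So your assertion that the count in dimension $k[K:\Q]$ ``goes through and yields the factor $k$'' is not justified as stated.

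The missing idea is to split the trace set by square class. If $t=a+bx$ with $a,b\in K$, then $t^2\in K$ forces $ab=0$, so $\cL(\Gamma)\subseteq K\cup Kx$. Count the two pieces separately. For $t\neq s$ in the same piece, $u=t-s$ satisfies $u^2\in\cO_K\setminus\{0\}$. Hence $1\leq\abs{\NK(u^2)}=\prod_{j=1}^d\abs{\tau_{j,1}(u)}^2$, where $\tau_{j,1}$ is any extension of $\sigma_j$ to $L$. Moreover $\abs{\tau_{j,1}(t)}=\abs{\sigma_j(\tr\gamma^2)+2}^{1/2}$ is controlled by Lemma \ref{lemmulti-SP} applied to $\gamma^2\in\Gamma^{(2)}$, using $\ell(\gamma^2)=2\ell(\gamma)$.

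This reproduces verbatim the $k=1$ separation inside each piece, with exponent $\sqrt2(r-1)(1-\delta)$ for $r\geq3$ and $1-\delta$ for $r=2$. The clustering bound therefore holds with the $k=1$ exponent and twice the constant. Your Step 2 then gives EGMM under the $k=1$ thresholds, which a fortiori proves the corollary; the factor $k$ in the exponent is an artifact of the bookkeeping.

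The same splitting into the $[L:K]$ square classes works for any multiquadratic $L/K$, so the bound $k\leq 2$ is not needed either. It can in fact fail: for $\Delta(2,4,6)$ one has $K=\Q$ but $L=\Q(\sqrt2,\sqrt3)$.

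Two smaller slips:
\begin{itemize}
\item $(\tr\gamma)^2=\tr(\gamma^2)+2$, not $\tr(\gamma^2)$.
\item For $r\geq 3$, Lemma \ref{lemmulti-SP} gives only the individual exponent $\sqrt{r-1}(1-\delta)$, not $1-\delta$. That is why $\theta$ carries the factor $\sqrt2(r-1)$.
\end{itemize}
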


This paper leaves open two related questions: whether
\eqref{eqstrong-contraction} is automatic for every semi-arithmetic group
of dimension $r\geq 3$ derived from a quaternion algebra and admitting a
generalized modular embedding.  Equivalently, whether the exponent
$\sqrt2(r-1)$ obtained in Proposition \ref{propsharp} below can be
improved to something less than $1$ independently of $\delta$ - and,
separately, whether the strictly stronger condition
\eqref{eqstrong-contraction-gen} forced when $k=2$ can be weakened to
\eqref{eqstrong-contraction} by a sharper version of
Lemma \ref{lemgon-general}.
See Section \ref{secconclusion}.
\subsection{New approach of this paper}
The proof of Theorems \ref{thmmain-compact} and \ref{thmmain-noncompact}
requires two new ingredients, which we describe briefly.

\medskip
\noindent{\it I) Multi-dimensional Schwarz-Pick contraction.}
When $r\geq 3$, a generalized modular embedding furnishes a holomorphic map
$F=(F_1,\ldots,F_{r-1}):\bH\to\bH^{r-1}$
(with components holomorphic or anti-holomorphic)
that is equivariant under the action of $\Gamma^{(2)}$ via
$(\sigma_2,\ldots,\sigma_r)$.
Since $\Gamma$ is not arithmetic, $F$ is not a product of automorphisms,
thus it is strictly contracting in the product Kobayashi metric.
In Lemma \ref{lemmulti-SP} we prove 
\[
\sum_{i=2}^{r}\ell(\gamma^{\sigma_i})^2\leq (r-1)(1-\delta)^2\;\ell(\gamma)^2, 
\]
for a uniform $\delta=\delta(\Gamma)>0$.
In particular,  {each} conjugate displacement satisfies
$\ell(\gamma^{\sigma_i})\leq\sqrt{r-1}\,(1-\delta)\,\ell(\gamma)$,
and the bound on $\abs{\sigma_i(\tr\gamma)}$ extends. 

\medskip
\noindent{\it II) Geometry of numbers for the norm form.}
With $r-1$ contracting embeddings, the norm $\NK(\tr\gamma)$ involves a
product of $r$ factors, of which $r-1$ are controlled by
Lemma \ref{lemmulti-SP}.
The geometry-of-numbers estimate, see Lemma \ref{lemgon}, uses Minkowski's second
theorem applied to the lattice $\cO_K$ to show that, for any two distinct
traces $s\neq t\in\cL(\Gamma)\cap[2,T]$,
\[
1\leq\abs{\NK(t-s)}<C_1\,\abs{t-s}\,T^{(r-1)^{3/2}(1-\delta)}, 
\]
giving $\#(\cL(\Gamma)\cap[N-1,N])\leq C\,N^{(r-1)^{3/2}(1-\delta)}$.
Unlike the case $r=2$ treated in \cite{BCDT25mult}, for $r\geq 3$ the exponent
$(r-1)^{3/2}(1-\delta)$ need not be less than $1$: it is less than $1$
precisely when the strong contraction condition \eqref{eqstrong-contraction}
holds, and this is what forces our main theorems to carry that hypothesis
when $r\geq 3$.

The combination of I) and II) shows that EGMM holds whenever a generalized
modular embedding exists and the strong contraction condition
\eqref{eqstrong-contraction} is satisfied. The latter is automatic for
$r\leq 2$ but is a restriction for $r\geq 3$.
\subsection{Applications to quantum chaos}
From the perspective of quantum chaos, our result implies that the
Poisson-like behavior of the energy level statistics, at least at the
level of mean multiplicities, extends to every semi-arithmetic surface
with a generalized modular embedding satisfying the strong contraction
condition \eqref{eqstrong-contraction} - automatically, all those of
arithmetic dimension at most two.
The exponent $C$ in our estimate
$\langle g(\ell)\rangle \geq ce^{C\ell}/\ell$
depends on both $r$ and the specific Schwarz-Pick
contraction constant $\delta(\Gamma)$.
This dependence is explained in Remark \ref{remexponent}.

\medskip 
The paper is organized as follows.
Section \ref{secprelim} recalls the necessary background on semi-arithmetic
groups, quaternion algebras, modular embeddings and the product hyperbolic
geometry.
Section \ref{secmulti-SP} proves the multi-dimensional Schwarz-Pick lemma.
Section \ref{secgon} develops the geometry-of-numbers norm-form estimate.
Section \ref{seccompact} proves Theorem \ref{thmmain-compact}.
Section \ref{secnoncompact} handles the non-cocompact case and proves
Theorem \ref{thmmain-noncompact}.
Section \ref{secexamples} describes new families of examples for $r\geq 3$.
Section \ref{secQC} discusses implications for quantum chaos.
\section{Preliminaries}
\label{secprelim}
\subsection{Hyperbolic geometry and Fuchsian groups}
Let $\bH=\{z\in\C:\Im(z)>0\}$ denote the upper half-plane equipped with
its hyperbolic metric $ds^2=(dx^2+dy^2)/y^2$ of constant curvature $-1$.
The group of orientation-preserving isometries is $\PSL(2,\R)$.
Elements of $\PSL(2,\R)$ are classified as  {elliptic}, parabolic,
or hyperbolic according to whether $\abs{\tr(\tilde\gamma)}<2$,
$=2$, or $>2$ for any lift $\tilde\gamma\in\SL(2,\R)$.
For a hyperbolic element $\gamma$, the  {translation length} is
\begin{equation}\label{eqlength}
\ell(\gamma)=2\arcosh\!\left(\frac{\abs{\tr\gamma}}{2}\right),
\end{equation}
which equals the length of the unique closed geodesic in the free homotopy
class of $\gamma$ on any orbifold quotient $\Gamma\backslash\bH$.

A Fuchsian group is a discrete subgroup $\Gamma<\PSL(2,\R)$.
We say $\Gamma$ has finite covolume if $\vol(\Gamma\backslash\bH)<\infty$.
Such a group is cocompact if $\Gamma\backslash\bH$ is compact and 
non-cocompact otherwise in which case $\Gamma$ has finitely many cusps.
\subsection{Semi-arithmetic groups and arithmetic dimension}
Let $\Gamma<\PSL(2,\R)$ be a finitely generated Fuchsian group of finite
covolume.
Define $\Gamma^{(2)}=\langle\gamma^2:\gamma\in\Gamma\rangle$.
The invariant trace field of $\Gamma$ is
\[
K=\Q\bigl(\{\tr\gamma:\gamma\in\Gamma^{(2)}\}\bigr).
\]
It is an invariant of the commensurability class of $\Gamma$
(see \cite[Chapter 3]{Maclachlan03}).

\begin{definition}\label{defsemi-arith}
A Fuchsian group $\Gamma$ of finite covolume is {semi-arithmetic} if:
(Condition 1) $K$ is a totally real algebraic number field; and

(Condition2) the traces of all elements of $\Gamma$ are algebraic integers. 
\end{definition}
Let $d=[K:\Q]$ and let $\sigma_1=\id$, $\sigma_2$, $\ldots$, $\sigma_d$ be the
real embeddings of $K$.
By Takeuchi's characterization \cite{Takeuchi75}, $\Gamma$ is arithmetic 
if and only if, in addition to Conditions 1-2 of
Definition \ref{defsemi-arith}, every embedding $\sigma_j$ with
$j\geq 2$ maps $\tr(\Gamma^{(2)})$ into the bounded set $[-2,2]$.
In the semi-arithmetic setting, we allow some embeddings to produce
unbounded images.

\begin{definition}[{\cite{Nugent17}}]\label{defarith-dim}
The  {arithmetic dimension} of a semi-arithmetic Fuchsian group $\Gamma$ is
\[
r(\Gamma)=\#\bigl\{j\in\{1,\ldots,d\}:\sigma_j(\tr(\Gamma^{(2)}))\not\subset[-2,2]\bigr\}.
\]
\end{definition}

We always have $\sigma_1(\tr(\Gamma^{(2)}))\not\subset[-2,2]$ since $\Gamma$
contains hyperbolic elements, thus $r\geq 1$.
Arithmetic groups have $r=1$; we have $1\leq r\leq d$.

By convention, we label the embeddings so that
$\sigma_1,\ldots,\sigma_r$ are those with unbounded images, and
$\sigma_{r+1},\ldots,\sigma_d$ are those with bounded images.
Thus
\begin{equation}\label{eqbounded-sigma}
\abs{\sigma_j(\tr\gamma)}\leq 2\quad
\text{ for all }\gamma\in\Gamma^{(2)}\text{ and }j\geq r+1.
\end{equation}
\subsection{Quaternion algebras and ambient arithmetic groups}
We recall the quaternion algebra description of semi-arithmetic groups,  
see \cite{SW00,BCDT25geom}.

Let $A/K$ be the quaternion algebra associated to $\Gamma^{(2)}$, which splits
over the embeddings $\sigma_1$, $\ldots$, $\sigma_r$ and is ramified over 
$\sigma_{r+1},\ldots,\sigma_d$.
Thus there is an isomorphism
\begin{equation}
\label{eqquat-decomp}
A\otimes_\Q\R\cong\rM(2,\R)^r\times\cH^{d-r},
\end{equation}
where $\cH$ denotes Hamilton's quaternion algebra, the real division
quaternion algebra $\cH=\bigl(\tfrac{-1,-1}{\R}\bigr)$.
The map $x\mapsto(y\rho_1(x),\ldots,y\rho_d(x))$ for $x\otimes y$ gives
explicit representations $\rho_i:A\to\rM(2,\R)$ for $1\leq i\leq r$
and $\rho_i:A\to\cH$ for $i>r$, satisfying
\[
\tr\rho_i(x)=\sigma_i(\tr x), \qquad \det\rho_i(x)=\sigma_i(\mathrm{norm}(x)).
\]

Let $A^1$ denote the group of norm-one elements.
Then $\rho$ restricts to an embedding
$\rho:A^1\hookrightarrow\SL(2,\R)^r$,
and $\rho(A^1)$ acts on $\bH^r$ component-wise via M\"{o}bius transformations.
For any order $\cO\subset A$, the Borel-Harish-Chandra theorem ensures
that $\rho(\cO^1)$ is a lattice in $\SL(2,\R)^r$.

\begin{definition}[{\cite{SW00}}]\label{defamb-arith}
A subgroup $\Delta<\PSL(2,\R)^r$ is an arithmetic group acting
on $\bH^r$ if it is commensurable with $\rP\rho_1(\cO^1)$ for some
order $\cO$ as above.
It is derived from a quaternion algebra if it is a finite-index
subgroup of $\rP\rho_1(\cO^1)$.
\end{definition}

By \cite[Proposition 1]{SW00}:

\begin{proposition}\label{propchar}
  A Fuchsian group $\Gamma$ of finite covolume is semi-arithmetic of arithmetic
  dimension $r$ if and only if it is commensurable to a subgroup of an arithmetic
  group $\Delta$ acting on $\bH^r$.
\end{proposition}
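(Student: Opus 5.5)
The plan is to prove the two implications of Proposition \ref{propchar} separately, using Takeuchi's characterization and the quaternion algebra description \eqref{eqquat-decomp}.

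\emph{Forward direction.} Let $\Gamma$ be semi-arithmetic of arithmetic dimension $r$.
\begin{enumerate}[label=(\roman*)]
\item Pass to the finite-index subgroup $\Gamma^{(2)}$, which is commensurable with $\Gamma$. Its traces lie in $\cO_K$ by Condition 2 of Definition \ref{defsemi-arith}.
\item Form $A=K[\Gamma^{(2)}]$, the $K$-span of the lifts of $\Gamma^{(2)}$ to $\SL(2,\R)$. As in the standard theory of \cite[Chapter 3]{Maclachlan03}, non-elementarity makes $A$ a quaternion algebra over $K$.
\item Let $\cO=\cO_K[\Gamma^{(2)}]$. Since all traces are integral, $\cO$ is an order in $A$, and $\Gamma^{(2)}\subset\cO^1$.
\item Determine the ramification of $A$ at each real place $\sigma_j$. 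If $\sigma_j(\tr(\Gamma^{(2)}))\not\subset[-2,2]$, then $A\otimes_{\sigma_j}\R$ contains an element of norm $1$ with $|\tr|>2$, so it cannot be $\cH$, where every norm-one element has trace in $[-2,2]$. Hence $A$ splits at $\sigma_j$.
\item Conversely, at the places where the traces are bounded, I will argue that $A$ is ramified. Suppose $A$ split at such a $\sigma_j$. Then $\sigma_j(\Gamma^{(2)})$ would be a Zariski-dense subgroup of $\SL(2,\R)$ with all traces in $[-2,2]$. Such a subgroup is relatively compact, hence conjugate into $\mathrm{SO}(2)$, hence abelian. This is impossible because $\Gamma^{(2)}$ is non-elementary.
\item Steps (iv) and (v) together show that $A$ splits exactly at $\sigma_1,\ldots,\sigma_r$. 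Then $\Gamma^{(2)}=\rP\rho_1(\text{subgroup of }\cO^1)$, and the product embedding $\rho$ identifies it with a subgroup of the lattice $\rP\rho(\cO^1)$ acting on $\bH^r$.
\end{enumerate}

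\emph{Backward direction.} Suppose $\Gamma$ is commensurable to a subgroup of $\Delta$, where $\Delta$ is commensurable with $\rP\rho(\cO^1)$.
\begin{enumerate}[label=(\roman*)]
\item Pass to $\Gamma'=\Gamma\cap\rP\rho(\cO^1)$, which has finite index in $\Gamma$.
\item Elements of $\cO^1$ have reduced trace in $\cO_K$, so the traces of $\Gamma'$ are integral. The traces of the elements of $\Gamma^{(2)}$ then lie in $\cO_K$. For the remaining elements of $\Gamma$, use that some power $\gamma^m$ lies in $\Gamma'$; $\tr\gamma$ is a root of a monic integral polynomial relation coming from the Chebyshev recursion, so it is integral.
\item The invariant trace field is a commensurability invariant, so it is contained in $K$ and is totally real.
\item For $j>r$, $\rho_j$ lands in $\cH^1\cong\mathrm{SU}(2)$, so the trace images are bounded by $2$.
\item For $j\le r$ with $j\ge2$, I must show the images are unbounded. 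The intended assertion is that $r$ counts exactly the split places. Here one takes $\Delta$ with $A$ split at exactly $r$ places, and notes that a split place with bounded traces is excluded by the argument of step (v) above. Consequently $r(\Gamma)=r$.
\end{enumerate}

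\emph{Main obstacle.} I expect the crux to be step (v) of the forward direction and its use in step (v) of the backward direction: the equality between split places and places with unbounded traces. To handle it I will use the compactness argument above, together with the Zariski density of $\sigma_j(\Gamma^{(2)})$ in $A^1\otimes_{\sigma_j}\R$. That density should follow from $A$ being spanned over $K$ by $\Gamma^{(2)}$.

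Finiteness of covolume is not needed for the commensurability claim itself. It is needed only for non-elementarity, which is used in steps (ii) and (v) of the forward direction.
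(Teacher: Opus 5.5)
Your forward direction is sound. It is the standard Takeuchi-type argument behind the Schmutz Schaller--Wolfart result, which the paper only cites and does not prove. One small correction there: in step (v) the property you actually use is that $\sigma_j(\Gamma^{(2)})$ spans $\rM(2,\R)$. Spanning, together with bounded traces and the nondegenerate trace pairing, gives relative compactness. Spanning does not by itself give Zariski density, but you do not need Zariski density.

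\textbf{The gap is in step (v) of the backward direction.} From step (iii) you only have $k\Gamma\subseteq E$, where $E$ is the centre of the quaternion algebra $A$ defining $\Delta$. But $r(\Gamma)$ counts real embeddings of $k\Gamma$, whereas $r$ counts split real places of $E$. Knowing that each split place of $E$ has unbounded trace image does not give $r(\Gamma)=r$, because distinct places of $E$ can restrict to the same embedding of $k\Gamma$.

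In fact the count differs by the degree. Put $B=k\Gamma[(\Gamma')^{(2)}]$. The map $B\otimes_{k\Gamma}E\to A$ is an isomorphism, since its source is central simple of dimension $4$ over $E$. Hence $A$ splits at a real place of $E$ exactly when $B$ splits at its restriction to $k\Gamma$. Since $E$ is totally real, this gives $r=[E:k\Gamma]\,r(\Gamma)$.

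A concrete counterexample: $\PSL(2,\Z)$ embeds diagonally in the Hilbert modular group $\PSL(2,\cO_E)$ with $E=\Q(\sqrt2)$. Taking $A=\rM(2,E)$, this is an arithmetic group acting on $\bH^2$ in the sense you use, yet $r(\PSL(2,\Z))=1$. So, as you have set it up, the backward implication with the exact value $r$ is false.

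To repair it you have two options:
\begin{itemize}
\item Require $E=k\Gamma$, i.e.\ that $\Delta$ comes from an order in the quaternion algebra of $\Gamma^{(2)}$ over the invariant trace field. This is what the paper's Definition \ref{defamb-arith} tacitly imposes through the words ``order $\cO$ as above''.
\item Conclude only that $\Gamma$ is semi-arithmetic with $r(\Gamma)$ dividing $r$, so that $r(\Gamma)$ is the minimal admissible $r$.
\end{itemize}

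Once $E=k\Gamma$ your argument closes, with one point you should state explicitly. Re-using forward step (v) at a split place requires $(\Gamma')^{(2)}$ to span $A$ over $E$. In the backward direction $A$ is given rather than built from $\Gamma$, and this spanning is exactly the isomorphism $B\otimes_{k\Gamma}E\cong A$ above.

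A minor point: finite covolume is used not only for non-elementarity but also, via finite generation, for $[\Gamma:\Gamma^{(2)}]<\infty$.
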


By \cite{BCDT25geom}, the subgroup $\Gamma^{(2)}$ is always derived from a
quaternion algebra.
We denote the corresponding group $\Delta$ the  {ambient arithmetic group}
of $\Gamma$.
\subsection{Generalized modular embeddings}
\label{subsecgen-mod}
The case $r=2$ of a modular embedding is treated in \cite{SW00,BCDT25mult}.
We now formulate the general definition for all $r\geq 1$.

Let $\Gamma$ be semi-arithmetic of arithmetic dimension $r$,
with $\Gamma^{(2)}$ derived from a quaternion algebra $A/K$.
The map $\rho\circ\rho_1^{-1}$ gives an embedding
\[
f:\Gamma^{(2)}\hookrightarrow\SL(2,\R)^r,
\quad
\gamma\mapsto (\gamma,\gamma^{\sigma_2},\ldots,\gamma^{\sigma_r}),
\]
where $\gamma^{\sigma_i}=\rho_i\circ\rho_1^{-1}(\gamma)$.
Note that $\tr(\gamma^{\sigma_i})=\sigma_i(\tr\gamma)$.

\begin{definition}\label{defgen-modular}
We say that $\Gamma$ admits a generalized modular embedding of rank $r$ 
if there exists a holomorphic or anti-holomorphic map
\[
F=(F_1,\ldots,F_{r-1}):\bH \longrightarrow (\pm\bH)^{r-1},  
\]
each component $F_i$ holomorphic or anti-holomorphic that is
$f$-equivariant 
\begin{equation}\label{eqequiv-gen}
F(\gamma\cdot z)=(\gamma^{\sigma_2}\cdot F_1(z),
\ldots,\gamma^{\sigma_r}\cdot F_{r-1}(z)), 
\end{equation}
for all $z\in\bH$ and all $\gamma\in\Gamma^{(2)}$.

When $r=1$, the arithmetic case, the condition is empty: we set $F$ to be 
the empty map.
When $r=2$ this reduces to Definition 2.4 of \cite{BCDT25mult}.
\end{definition}

\begin{remark}\label{remmodular-existence}
  (a) By Cohen-Wolfart \cite{CW90} all Fuchsian triangle groups admit modular
  embeddings. Ricker \cite{Ricker02} proved the same for symmetric quadrilateral
  groups.
  Schmutz Schaller-Wolfart \cite{SW00} and McMullen \cite{McMullen03} provide
  further examples.
  For higher arithmetic dimension, a theory of generalized Teichmuller
  curves, i.e., Hilbert modular varieties and their sub-curves, provides the
  appropriate context, see \cite{McMullen23,BCDT25geom}.

  (b) When $r\geq 3$ the target $(\pm\bH)^{r-1}$ is a product of hyperbolic
  planes. The equivariance condition encodes the action of $\Gamma^{(2)}$
  via all $r-1$ non-trivial unbounded embeddings simultaneously.
  The main geometric property is that $F$ maps $\bH$ to a proper
  (i.e., strict, non-diagonal) holomorphic subspace of $(\pm\bH)^{r-1}$
  whenever $\Gamma$ is
  non-arithmetic, which is the source of strict contraction; ``proper''
  is used here in the sense of a strict subspace, not in the point-set
  topological sense of a proper map.
\end{remark}
\subsection{Product hyperbolic geometry}
We equip $\bH^{r-1}$ with the product Riemannian metric:
for $w = (w_1,\ldots,w_{r-1})\in\bH^{r-1}$ and
$v=(v_1,\ldots,v_{r-1})\in T_w\bH^{r-1}$,
\[
\norm{v}_{r-1}^2=\sum_{i=1}^{r-1}\norm{v_i}_{w_i}^2,
\]
where $\norm{\cdot}_{w_i}$ is the hyperbolic norm at $w_i\in\bH$.
The induced distance function on $\bH^{r-1}$ is
\[
d_{r-1}(w,w')^2=\sum_{i=1}^{r-1}d_\bH(w_i,w_i')^2.
\]
With this metric, $\bH^{r-1}$ has sectional curvature in $[-1,0]$ and is
non-positively curved. In particular it is a CAT(0) space.

For a holomorphic or anti-holomorphic map $G:\bH\to\bH$,
the Schwarz-Pick lemma states that $G$ does not increase hyperbolic distances, i.e., 
$d_\bH(G(z),G(w))\leq d_\bH(z,w)$ for all $z,w\in\bH$,
with equality if and only if $G\in\Isom(\bH)$.

For the product map $F=(F_1,\ldots,F_{r-1}):\bH\to\bH^{r-1}$, the
operator norm of its differential at $z\in\bH$ is
\[
\norm{DF_z}_{\mathrm{op}}
=\sup\!\left\{\norm{DF_z(v)}_{r-1}:v\in T_z\bH,\norm{v}_z=1\right\}
=\left(\sum_{i=1}^{r-1}\abs{D(F_i)_z}^2\right)^{1/2},
\]
where $\abs{D(F_i)_z}$ is the hyperbolic derivative of the scalar map $F_i$ at $z$.
\subsection{Summary of notation}
For reference, the recurring notation of the paper is collected here.
\begin{center}
\small
\begin{tabular}{|l|p{8.5cm}|}
\hline
$\Gamma$ & semi-arithmetic Fuchsian group of finite covolume \\
$K$, $d=[K:\Q]$ & invariant trace field of $\Gamma$, and its degree \\
$r=r(\Gamma)$ & arithmetic dimension of $\Gamma$ \\
$\sigma_1=\mathrm{id},\ldots,\sigma_d$ & real embeddings of $K$ ($\sigma_1,\ldots,\sigma_r$ unbounded) \\
$\cO_K$ & ring of integers of $K$ \\
$\cL(\Gamma)$ & the set $\{\abs{\tr\gamma}:\gamma\in\Gamma\}$ \\
$\ell(\gamma)$ & translation length of a hyperbolic $\gamma\in\Gamma$ \\
$\cN(\ell)$, $\cN'(\ell)$ & closed geodesics of length $\leq\ell$, with and without multiplicity \\
$\langle g(\ell)\rangle$ & mean multiplicity of the length spectrum \\
$F=(F_1,\ldots,F_{r-1})$ & generalized modular embedding, $F:\bH\to(\pm\bH)^{r-1}$ \\
$\delta=\delta(\Gamma)$ & Schwarz-Pick contraction constant of $F$ \\
$\Delta$, $X=\Delta\backslash(\pm\bH)^r$ & ambient arithmetic group, and its quotient \\
\hline
\end{tabular}
\end{center}
{\it Our convention:} whenever a non-identity Galois conjugate
$\gamma^{\sigma_i}$ of a hyperbolic $\gamma$ is elliptic (or trivial)
rather than hyperbolic, we set its translation length $\ell(\gamma^{\sigma_i}):=0$
by convention. This is used without further comment from
Lemma \ref{lemmulti-SP} onward.
\section{Multi-dimensional Schwarz-Pick contraction}
\label{secmulti-SP}
The following lemma is the first new ingredient.
For $r=2$ it reduces to Lemma 3.1 of \cite{BCDT25mult}, proved there
using the Schwarz-Pick lemma applied to $F:\bH\to\pm\bH$.
Our proof generalizes this to the product setting.

\begin{lemma}[Multi-dimensional Schwarz-Pick contraction]
\label{lemmulti-SP}
Let $\Gamma$ be a cocompact semi-arithmetic Fuchsian group of arithmetic
dimension $r\geq 2$ admitting a generalized modular embedding 
$F=(F_1,\ldots,F_{r-1}):\bH\to(\pm\bH)^{r-1}$ as in Definition \ref{defgen-modular}.
Then there exists a constant $0<\delta=\delta(\Gamma)<1$ such that 
\begin{equation}
\label{eqop-bound}
\norm{DF_z}_{\mathrm{op}}\leq\sqrt{r-1}\,(1-\delta)
\quad\text{for all }z\in\bH.
\end{equation}
Equivalently, for every hyperbolic $\gamma\in\Gamma^{(2)}$,
\begin{equation}
\label{eqsum-displ}
\sum_{i=2}^{r}\ell(\gamma^{\sigma_i})^2\leq (r-1)(1-\delta)^2\,\ell(\gamma)^2,
\end{equation}
and hence for each $i\in\{2,\ldots,r\}$,
\begin{equation}
\label{eqeach-displ}
\ell(\gamma^{\sigma_i})\leq\sqrt{r-1}\,(1-\delta)\,\ell(\gamma).
\end{equation}
In particular, the product of the non-trivial conjugate traces satisfies
\begin{equation}
\label{eqtrace-prod-bound}
\prod_{i=2}^r\abs{\sigma_i(\tr\gamma)}\leq 2^{r-1}\abs{\tr\gamma}^{(r-1)(1-\delta)}, 
\end{equation}
for all hyperbolic $\gamma\in\Gamma^{(2)}$.
\end{lemma}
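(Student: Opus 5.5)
The approach is to follow the $r=2$ argument of \cite{BCDT25mult} componentwise, and to extract uniformity from compactness of $\Gamma\backslash\bH$.

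\emph{Step 1: pointwise strict contraction.} For each $i$, the component $F_i:\bH\to\pm\bH$ is holomorphic or anti-holomorphic, so the Schwarz-Pick lemma gives $\abs{D(F_i)_z}\leq 1$, with equality at some $z$ only if $F_i\in\Isom(\bH)$. If some $F_i$ were an isometry, then $\gamma\mapsto\gamma^{\sigma_{i+1}}$ would be conjugate in $\PSL(2,\R)$ to the identity embedding on $\Gamma^{(2)}$. Then $\sigma_{i+1}(\tr\gamma)=\pm\tr\gamma$ for all $\gamma\in\Gamma^{(2)}$, so $\sigma_{i+1}$ restricted to $K$ would be the identity, contradicting $\sigma_{i+1}\neq\sigma_1$. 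Hence $\abs{D(F_i)_z}<1$ for every $z$ and every $i$.

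\emph{Step 2: uniformity.} By equivariance \eqref{eqequiv-gen}, and since each $\gamma^{\sigma_i}$ acts isometrically, the function $z\mapsto\abs{D(F_i)_z}$ is $\Gamma^{(2)}$-invariant. It is continuous and descends to $\Gamma^{(2)}\backslash\bH$, which is compact because $\Gamma^{(2)}$ has finite index in the cocompact group $\Gamma$. So the maximum $m_i<1$ is attained. Put $1-\delta=\max_i m_i$. Then
\[
\norm{DF_z}_{\mathrm{op}}^2=\sum_i\abs{D(F_i)_z}^2\leq (r-1)(1-\delta)^2,
\]
which is \eqref{eqop-bound}.

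\emph{Step 3: displacement bounds.} Take hyperbolic $\gamma$ and a point $z$ on its axis. Let $c$ be the geodesic segment from $z$ to $\gamma z$, of length $\ell(\gamma)$. Each component of $F\circ c$ joins $F_i(z)$ to $\gamma^{\sigma_{i+1}}F_i(z)$, so
\[
\ell(\gamma^{\sigma_{i+1}})\leq d_\bH\bigl(F_i(z),\gamma^{\sigma_{i+1}}F_i(z)\bigr)\leq m_i\,\ell(\gamma)\leq(1-\delta)\ell(\gamma).
\]
The first inequality holds because the translation length is the infimum of displacement, and it remains valid under our convention when the conjugate is elliptic. Squaring and summing gives \eqref{eqsum-displ}, and each summand is then bounded as in \eqref{eqeach-displ}; in fact we obtain the stronger per-component bound with $\sqrt{r-1}$ removed.

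\emph{Step 4: traces.} Trace bounds follow from \eqref{eqlength}. If $\gamma^{\sigma_i}$ is non-hyperbolic, then $\abs{\sigma_i(\tr\gamma)}\leq2$. Otherwise
\[
\abs{\sigma_i(\tr\gamma)}\leq 2\cosh\!\bigl(\tfrac{(1-\delta)\ell(\gamma)}{2}\bigr)\leq 2\,e^{(1-\delta)\ell(\gamma)/2},
\]
and $e^{\ell(\gamma)/2}\leq\abs{\tr\gamma}$. Hence $\abs{\sigma_i(\tr\gamma)}\leq 2\abs{\tr\gamma}^{1-\delta}$ in both cases, using $\abs{\tr\gamma}\geq 2$. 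Taking the product over $i$ gives \eqref{eqtrace-prod-bound}.

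\emph{Main obstacle.} The main obstacle is Step 1, the rigidity claim that a component which is an isometry forces $\sigma_{i+1}=\mathrm{id}$ on $K$. It requires care with anti-holomorphic components and with the sign ambiguity in $\PSL$; I would pass to traces of squares to remove the sign. Step 2 is also delicate because it is exactly where cocompactness enters, and the non-cocompact version must be handled separately via a compact core, as the paper's Lemma \ref{lemnoncompact-displ} indicates.
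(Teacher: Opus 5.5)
Your proof is correct, and it takes a genuinely different route from the paper's.

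\textbf{Rigidity step.} The paper shows only that \emph{some} component $F_i$ is not an isometry, via the totally geodesic/Bergeron--Clozel argument. It then defines $\delta$ through the aggregate quantity $\sup_z\norm{DF_z}_{\mathrm{op}}=\sqrt{r-1}\,(1-\delta)$. As a result it obtains only $\ell(\gamma^{\sigma_i})\le\sqrt{r-1}\,(1-\delta)\,\ell(\gamma)$ per embedding, and needs Cauchy--Schwarz to reach \eqref{eqtrace-prod-bound}. You instead show by an elementary trace argument that \emph{every} component is non-isometric: an isometric $F_i$ conjugates the $\sigma_{i+1}$-twisted representation to the identity in $\mathrm{PGL}(2,\R)$, which forces $\sigma_{i+1}|_K=\mathrm{id}$. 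The sign ambiguity you flag disappears on the kernel of the resulting character $\Gamma^{(2)}\to\{\pm1\}$. That kernel has index at most $2$, so its traces still generate $K$ by commensurability invariance. You then run the $r=2$ compactness argument of \cite{BCDT25mult} on each component separately.

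\textbf{Choice of $\delta$.} Your constant, $1-\delta=\max_i\sup_z\abs{D(F_i)_z}$, is at most $\sup_z\norm{DF_z}_{\mathrm{op}}$. So your $\delta$ can be smaller than the paper's, and your version of \eqref{eqsum-displ} is weaker. The lemma is existential, though, so both choices are admissible. Moreover, your per-embedding bound $\ell(\gamma^{\sigma_i})\le(1-\delta)\,\ell(\gamma)$ is always at least as strong as the paper's \eqref{eqeach-displ}.

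\textbf{What your route gains.} You get $\abs{\sigma_i(\tr\gamma)}\le 2\abs{\tr\gamma}^{1-\delta}$ for each $i$. This contradicts the claim in Remark \ref{remexponent-clarification} that individual conjugate traces need not contract uniformly. Inserted into the proof of Lemma \ref{lemgon}, it yields the exponent $(r-1)(1-\delta)$ with your $\delta$, which never exceeds the paper's $(r-1)^{3/2}(1-\delta)$ with its own $\delta$.

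\textbf{A correction to the paper.} Your Step 3 is also the correct version of the paper's part 2). The distance $d_\bH(F_i(z),F_i(\gamma z))$ is controlled by the supremum of $\abs{D(F_i)}$ along the segment from $z$ to $\gamma z$, not by the derivative at the single point $z_0$ as the paper writes.

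\textbf{One loose end.} You do not check $\delta<1$, but this is harmless. $F_i$ cannot be constant, since then $\sigma_{i+1}(\tr\Gamma^{(2)})$ would be bounded. In any case, decreasing $\delta$ only weakens the conclusions.
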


\begin{proof}
   
\medskip

\noindent{\it 1) Strict contraction of the product map.}
Let $\Omega\subset\bH$ be a fundamental domain for $\Gamma$ contained in a
compact set which exists by cocompactness.
For each component $F_i:\bH\to\pm\bH$, the map $F_i$ is holomorphic or
anti-holomorphic by assumption. 
Since $\Gamma$ is non-arithmetic, the map $F=(F_1,\ldots,F_{r-1})$ is not
a product of isometries of $\bH$.
We claim that $F_i$ is not an automorphism of $\bH$ for at least one $i$.
Indeed, if every $F_i$ were an isometry, then $F$ would be an isometry of
$\bH$ onto a totally geodesic copy of $\bH$ inside $\bH^{r-1}$, and the
surface $S=\Gamma^{(2)}\backslash\bH$ would be totally geodesic in
$\Delta\backslash\bH^r$ with the locally symmetric metric.
By Bergeron-Clozel \cite[Proposition 15.2.2]{BergeroClozel05} this would
force $\Gamma$ to be arithmetic, contradicting our assumption.
More precisely, consider the full equivariant map
$\widetilde{G}:\bH\to\bH^r$ defined by
$\widetilde{G}(z)=(z,F_1(z),\ldots,F_{r-1}(z))$, 
which descends to an embedding $G:S\to X=\Delta\backslash(\pm\bH)^r$  
using $\pm\bH$ to allow anti-holomorphic components.
By property (iii) in Section 2.2 of \cite{BCDT25mult}, 
 generalized to $r\geq 2$ by the same Bergeron-Clozel argument,
$G$ is not a totally geodesic isometric immersion.
Hence $\widetilde{G}$ is not an isometry, thus, 
\[
\sup_{z\in\overline{\Omega}}\norm{D\widetilde{G}_z}_{\mathrm{op}}<\sqrt{r},
\]
since $\norm{D\widetilde{G}_z}_{\mathrm{op}}^2
=\norm{D(\id)_z}^2+\sum_{i=1}^{r-1}\abs{D(F_i)_z}^2
=1+\norm{DF_z}_{\mathrm{op}}^2$,
and for an isometry the full operator norm would be $\sqrt{r}$ the identity
component alone contributing $1$, and the remaining $r-1$ components each
contributing at most $1$ under Schwarz-Pick.
By the Schwarz-Pick lemma applied component-wise, we have
$\abs{D(F_i)_z}\leq 1$ for all $z\in\bH$ and all $i$.
The function $z\mapsto\norm{DF_z}_{\mathrm{op}}^2=\sum_{i=1}^{r-1}\abs{D(F_i)_z}^2$
is continuous and $\Gamma$-invariant by the equivariance \eqref{eqequiv-gen}
and the invariance of the hyperbolic metric, thus it is bounded.
Its supremum over the compact fundamental domain $\overline{\Omega}$ is
attained, say at $z^*\in\overline\Omega$.

We make explicit why this supremum cannot equal the maximum possible
value $\sqrt{r-1}$. Suppose it did, i.e., 
$\sum_{i=1}^{r-1}\abs{D(F_i)_{z^*}}^2=r-1$. Since each term is at most
$1$ by Schwarz-Pick, a sum of $r-1$ such terms equal to $r-1$ forces
 every term to equal $1$, i.e.,  $\abs{D(F_i)_{z^*}}=1$ for all
$i=1,\ldots,r-1$. Recall the equality case of the Schwarz-Pick lemma, in
the following pointwise form (via the Cayley transform, this is the
classical equality case of the Schwarz lemma for self-maps of the unit
disc): a
holomorphic self-map $f$ of $\bH$ with $\abs{Df_{z}}=1$ at even a single
point $z\in\bH$ is necessarily an automorphism of $\bH$, and then
$\abs{Df_w}=1$ at every $w\in\bH$. Applying this to each $F_i$ at
$z^*$ would force every $F_i$ to be a hyperbolic isometry on all of
$\bH$, hence $F$ an isometric embedding, hence $\widetilde G$ an
isometric embedding, i.e.,  $G$ a totally geodesic isometric immersion,
contradicting the fact recalled above that $G$ is not totally
geodesic. Hence the supremum, though attained on the compact set
$\overline\Omega$, is strictly less than $\sqrt{r-1}$, the maximum the
$r-1$ Schwarz-Pick contractions can jointly achieve.
We therefore have
\[
\sup_{z\in\overline{\Omega}}\norm{DF_z}_{\mathrm{op}}=\sqrt{r-1}\,(1-\delta), 
\]
for some $0<\delta<1$.
By the equivariance \eqref{eqequiv-gen} and the $\Gamma^{(2)}$-invariance of
the hyperbolic metrics, this supremum is the same over all of $\bH$, i.e., 
$\sup_{z\in\bH}\norm{DF_z}_{\mathrm{op}}=\sqrt{r-1}\,(1-\delta)$.
This gives \eqref{eqop-bound}.

\medskip
\noindent{\it 2) Displacement estimates.}
Let $\gamma\in\Gamma^{(2)}$ be hyperbolic with displacement
$\ell=\ell(\gamma)=d_\bH(z_0,\gamma z_0)$,
where $z_0\in\bH$ is a point on the axis of $\gamma$.
For each $i\in\{2,\ldots,r\}$, either $\gamma^{\sigma_i}$ is elliptic or
hyperbolic.

\textit{Case (a): $\gamma^{\sigma_i}$ is hyperbolic} with displacement
\[
\ell_i=\ell(\gamma^{\sigma_i})=d_\bH(F_{i-1}(z_0),\gamma^{\sigma_i}F_{i-1}(z_0)).
\]
By the equivariance \eqref{eqequiv-gen},
$\gamma^{\sigma_i}F_{i-1}(z_0)=F_{i-1}(\gamma z_0)$, thus 
\[
\ell_i\leq d_\bH(F_{i-1}(z_0),F_{i-1}(\gamma z_0))
\leq\abs{D(F_{i-1})_{z_0}}\cdot\ell
\leq \sqrt{r-1}\,(1-\delta)\,\ell,
\]
using \eqref{eqop-bound} for the last inequality.

\textit{Case (b): $\gamma^{\sigma_i}$ is elliptic,} whence $\ell_i=0$
and the bound is trivially satisfied.
Summing the squares over $i=2,\ldots,r$,  
\[
\sum_{i=2}^r\ell_i^2
\leq\sum_{i=2}^r\abs{D(F_{i-1})_{z_0}}^2\cdot\ell^2
=\norm{DF_{z_0}}_{\mathrm{op}}^2\cdot\ell^2
\leq (r-1)(1-\delta)^2\,\ell^2.
\]
This gives \eqref{eqsum-displ}, and \eqref{eqeach-displ} follows immediately.

\medskip
\noindent{\it 3) Trace bounds and product estimate.}
Fix $i\in\{2,\ldots,r\}$. By \eqref{eqeach-displ}, the individual displacement satisfies 
$\ell_i\leq\sqrt{r-1}(1-\delta)\ell$. 
Using $\ell \leq 2\log\abs{\tr\gamma}$ for $\abs{\tr\gamma}\geq 2$,
 we obtain the individual bound 
\[
\abs{\sigma_i(\tr\gamma)}=2\cosh(\ell_i/2)\leq 2e^{\ell_i/2} 
\leq 2\abs{\tr\gamma}^{\sqrt{r-1}(1-\delta)}.
\]
While this individual exponent $\sqrt{r-1}(1-\delta)$ may exceed $1$ 
for large $r$, we can tightly bound the  {product} of all 
non-trivial embeddings to circumvent this. By the Cauchy-Schwarz inequality 
applied to the displacements
\[
\sum_{i=2}^r\ell_i\leq\sqrt{r-1}\left(\sum_{i=2}^r\ell_i^2\right)^{1/2}\leq 
(r-1)(1-\delta)\ell.
\]
Consequently, the product of the traces is bounded by
\[
\prod_{i=2}^r\abs{\sigma_i(\tr\gamma)}\leq\prod_{i=2}^r2e^{\ell_i/2}=2^{r-1}\exp\left(\frac{1}{2}\sum_{i=2}^r\ell_i\right)\leq 2^{r-1}\exp\left(\frac{(r-1)(1-\delta)}{2}
\ell\right).
\]
Substituting $\ell\leq 2\log\abs{\tr\gamma}$ gives \eqref{eqtrace-prod-bound}.
\end{proof}
\begin{remark}\label{remexponent-clarification}
For $r=2$, the individual exponent $\sqrt{r-1}(1-\delta)$ reduces exactly to $1-\delta$, recovering the strict component-wise trace contraction in \cite{BCDT25mult}. For $r \geq 3$, the individual traces $\sigma_i(\tr\gamma)$ are not guaranteed to uniformly contract relative to $\tr\gamma$. However, \eqref{eqtrace-prod-bound} ensures that their  
common growth is heavily constrained by the sum-of-squares bound, which is the property required for the geometry-of-numbers estimate. This is also the source of the loss we incur below: separating two traces $s\neq t$ requires bounding $\abs{\sigma_i(t)-\sigma_i(s)}$ term by term, not just the product $\prod_i\abs{\sigma_i(t)}$, and the worst case allows $t$ and $s$ to be misaligned across the $r-1$ embeddings. This is why Lemma \ref{lemgon} below only achieves the exponent $(r-1)^{3/2}(1-\delta)$ rather than $(r-1)(1-\delta)$, and why our main theorems require the strong contraction condition \eqref{eqstrong-contraction} once $r\geq 3$.
\end{remark}
\section{Geometry of numbers estimate}
\label{secgon}
\subsection{Setup and notation}
Let $\Gamma$ be semi-arithmetic of arithmetic dimension $r$, with invariant
trace field $K$, ring of integers $\cO_K$, and degree $d=[K:\Q]$.
Let $\sigma_1,\ldots,\sigma_r$ be the unbounded embeddings and
$\sigma_{r+1},\ldots,\sigma_d$ the bounded ones thus  \eqref{eqbounded-sigma} holds.
We use $\cL(\Gamma)$ exclusively for the trace set 
$\{\abs{\tr\gamma}:\gamma\in\Gamma\}\subset\R$, and $K$, or later
$L=\Q(\cL(\Gamma))$, exclusively for a  field. The trace set is not a
field and the two kinds of object are never used interchangeably below.
\subsection{Separation of traces}
The following Lemma describes norm form and trace separation. 
\begin{lemma}
\label{lemgon}
Let $\Gamma$ be a cocompact semi-arithmetic Fuchsian group of arithmetic 
dimension $r$ admitting a generalized modular embedding, and assume
moreover that $\Gamma$ is derived from a quaternion algebra
(Definition \ref{defamb-arith}), which ensures $\cL(\Gamma)\subseteq K$.
There exist constants $C_1 = C_1(\Gamma) > 0$ and 
$\delta_1=\delta_1(\Gamma,r)>0$ such that for all $T>2$ and any two
distinct traces $s\neq t\in\cL(\Gamma)\cap[2,T]$,
\begin{equation}
\label{eqsep}
\abs{t-s}\geq c_1\,T^{-(r-1)^{3/2}(1-\delta)},
\end{equation}
where $c_1=c_1(\Gamma,r)>0$.
Consequently, 
\begin{equation}\label{eqcluster-gen}
\#\bigl(\cL(\Gamma)\cap[N-1,N]\bigr)\leq C\,N^{(r-1)^{3/2}(1-\delta)}, 
\end{equation}
for all $N>2$, with $C=C(\Gamma,r)>0$.
\end{lemma}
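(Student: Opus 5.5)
The plan is to follow the norm-form argument of \cite{BCDT25mult}, feeding in Lemma \ref{lemmulti-SP} in place of the one-dimensional Schwarz-Pick bound.

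Let $s\neq t$ lie in $\cL(\Gamma)\cap[2,T]$. Since $\Gamma$ is derived from a quaternion algebra, $s,t\in\cO_K$, and so $t-s$ is a nonzero algebraic integer of $K$. Hence
\[
1\leq\abs{\NK(t-s)}=\abs{t-s}\prod_{i=2}^{r}\abs{\sigma_i(t)-\sigma_i(s)}\prod_{j=r+1}^{d}\abs{\sigma_j(t)-\sigma_j(s)}.
\]
The factors with $j\geq r+1$ are bounded by $4$ using \eqref{eqbounded-sigma}. The traces of $\Gamma$ differ from those of $\Gamma^{(2)}$ only through the standard relation $\tr(\gamma^2)=\tr(\gamma)^2-2$, which is harmless up to squaring and changing constants. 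The whole estimate therefore comes down to bounding the middle product.

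For each $i\in\{2,\ldots,r\}$ we use the triangle inequality $\abs{\sigma_i(t)-\sigma_i(s)}\leq\abs{\sigma_i(t)}+\abs{\sigma_i(s)}$. Each of the two terms is bounded by \eqref{eqeach-displ} in the form $\abs{\sigma_i(\tr\gamma)}\leq 2\abs{\tr\gamma}^{\sqrt{r-1}(1-\delta)}\leq 2T^{\sqrt{r-1}(1-\delta)}$. This is the crude step mentioned in Remark \ref{remexponent-clarification}. The bound \eqref{eqtrace-prod-bound} controls the product for a single $\gamma$, but it cannot be used for differences: $t$ and $s$ may have large conjugates in different coordinates, so we must bound each coordinate separately. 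Taking the product over the $r-1$ indices gives
\[
\prod_{i=2}^r\abs{\sigma_i(t)-\sigma_i(s)}\leq 4^{r-1}T^{(r-1)\sqrt{r-1}(1-\delta)}=4^{r-1}T^{(r-1)^{3/2}(1-\delta)}.
\]
Substituting into the norm inequality yields $1\leq C_1\abs{t-s}\,T^{(r-1)^{3/2}(1-\delta)}$ with $C_1=4^{d-1}$. This is \eqref{eqsep} with $c_1=C_1^{-1}$; any fixed small $\delta_1$ can be absorbed here. We do not need Minkowski's theorem for the separation itself. The lattice $\cO_K$ enters only through the integrality of the norm, though one could phrase the estimate as counting lattice points in a box.

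For \eqref{eqcluster-gen}, take $T=N$. Any two distinct points of $\cL(\Gamma)\cap[N-1,N]$ are at least $c_1N^{-(r-1)^{3/2}(1-\delta)}$ apart, so the interval of length $1$ contains at most $1+c_1^{-1}N^{(r-1)^{3/2}(1-\delta)}$ of them. This is at most $CN^{(r-1)^{3/2}(1-\delta)}$ for $N>2$.

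The main obstacle is the passage from $\Gamma$ to $\Gamma^{(2)}$. Lemma \ref{lemmulti-SP} is stated only for hyperbolic elements of $\Gamma^{(2)}$, while $\cL(\Gamma)$ contains traces of all elements. I would first handle this by using $\gamma^2\in\Gamma^{(2)}$ and the relation $\sigma_i(\tr\gamma)^2=\sigma_i(\tr\gamma^2)+2$. That relation gives $\abs{\sigma_i(\tr\gamma)}\leq\sqrt{2\abs{\tr\gamma}^{2\sqrt{r-1}(1-\delta)}+2}$, which has the same exponent up to constants. Non-hyperbolic elements have $\abs{\tr\gamma}\leq2$ and contribute only boundedly many traces.
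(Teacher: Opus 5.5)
Your proposal is correct and is essentially the paper's proof. The shared steps are: integrality of $\NK(t-s)$; the triangle inequality in each of the $r-1$ unbounded conjugate coordinates, combined with the individual bound $\abs{\sigma_i(\tr\gamma)}\leq 2T^{\sqrt{r-1}(1-\delta)}$ from part 3) of Lemma \ref{lemmulti-SP}; a factor $4$ from each bounded embedding, giving $c_1=4^{-(d-1)}$; and a packing count in $[N-1,N]$. Two points you handle are passed over silently in the paper's proof, and both are handled correctly: the reduction from $\Gamma$ to $\Gamma^{(2)}$ via $\sigma_i(\tr\gamma)^2=\sigma_i(\tr\gamma^2)+2$, and the $+1$ in the count $1+c_1^{-1}N^{(r-1)^{3/2}(1-\delta)}$.
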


\begin{proof}
{\it 1) Norm form lower bound.}
Since $t-s\neq 0$ and both $t,s\in\cL(\Gamma)\subset\cO_K$, 
the element $t-s\in\cO_K\setminus\{0\}$.
By the product formula for the norm over $\Q$,
\[
1\leq\abs{\NK(t-s)}=\prod_{j=1}^{d}\abs{\sigma_j(t-s)}.
\]

{\it 2) Upper bound on the norm form.}
We bound the components of the product.
\smallskip
\textit{Factors $j=1$:} By assumption, $s,t\in[2,T]$, thus 
$\abs{t-s}\leq T-2<T$.

\textit{Factors $j\in\{2,\ldots,r\}$, i.e., unbounded embeddings:}
by 3) of Lemma \ref{lemmulti-SP}, for elements with traces $t$, $s\leq T$, 
the individual embeddings are bounded by 
\[
\abs{\sigma_j(t)},\abs{\sigma_j(s)}<2T^{\sqrt{r-1}(1-\delta)}.
\]
Hence, their sum satisfies 
\[
\abs{\sigma_j(t-s)}\leq\abs{\sigma_j(t)}+\abs{\sigma_j(s)}\leq4T^{\sqrt{r-1}(1-\delta)}.
\]
Multiplying these $r-1$ factors together gives 
\[
\prod_{j=2}^r\abs{\sigma_j(t-s)}\leq4^{r-1}T^{(r-1)^{3/2}(1-\delta)}.
\]

\textit{Factors $j\in\{r+1,\ldots,d\}$, i.e., bounded embeddings:}
 by \eqref{eqbounded-sigma},
\[
\abs{\sigma_j(t)},\abs{\sigma_j(s)} \leq 2,
\]
thus $\abs{\sigma_j(t-s)}\leq 4$.

{\it 3) Combining.}
The norm product satisfies (throughout this step we keep every
inequality non-strict, consistent with the statement of \eqref{eqsep};
each individual bound above in fact holds strictly except in the
measure-zero case of equality in Schwarz-Pick, but non-strict is all we
use)
\begin{equation}
\label{eqnorm-prod}
1\leq\abs{\NK(t-s)}\leq\abs{t-s}\cdot 4^{r-1}T^{(r-1)^{3/2}(1-\delta)}\cdot 4^{d-r}.
\end{equation}
Rearranging 
\[
\abs{t-s}\geq\frac{1}{4^{d-1}T^{(r-1)^{3/2}(1-\delta)}}=c_1\,T^{-(r-1)^{3/2}(1-\delta)},
\]
with $c_1=4^{-(d-1)}>0$.
This gives the separation \eqref{eqsep}.

{\it 4) Clustering bound.}
The interval $[N-1,N]$ has length $1$.
Any collection of points in $[N-1,N]$ with pairwise separation at least
$c_1N^{-(r-1)^{3/2}(1-\delta)}$ has at most
\[
\frac{1}{c_1N^{-(r-1)^{3/2}(1-\delta)}}=c_1^{-1}N^{(r-1)^{3/2}(1-\delta)}
\]
elements.
Setting $C=c_1^{-1}$ gives \eqref{eqcluster-gen}.
\end{proof}

Now let us discuss the Sharpened exponent for $r\geq 3$. 
\begin{proposition}
\label{propsharp}
Under the hypotheses of Lemma \ref{lemgon}, if $r\geq 3$ the exponent
$(r-1)^{3/2}(1-\delta)$ in \eqref{eqsep}-\eqref{eqcluster-gen} can be
replaced by $\sqrt2\,(r-1)(1-\delta)$, which is strictly smaller for
$r\geq 4$ and equal to it at $r=3$.
\end{proposition}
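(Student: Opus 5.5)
The plan is to rerun the proof of Lemma \ref{lemgon} unchanged in steps 1), 3) and 4), and to replace only the treatment of the factors $j\in\{2,\ldots,r\}$ in step 2). Lemma \ref{lemgon} bounds each factor separately by the worst-case individual exponent $\sqrt{r-1}(1-\delta)$ and so loses a factor $\sqrt{r-1}$. Instead, I will bound the whole product $\prod_{j=2}^r\abs{\sigma_j(t-s)}$ at once, using the sum-of-squares bound \eqref{eqsum-displ} for both $t$ and $s$ together.

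Concretely, I will write $t=\abs{\tr\gamma}$ and $s=\abs{\tr\beta}$ with $\gamma,\beta$ as in Lemma \ref{lemgon}, for which Lemma \ref{lemmulti-SP} applies exactly as it is used there. I will set $a_j=\ell(\gamma^{\sigma_j})/2$ and $b_j=\ell(\beta^{\sigma_j})/2$. Under the convention that elliptic conjugates have length $0$, the identity $\abs{\sigma_j(\tr\gamma)}=2\cosh a_j$ when hyperbolic, and the bound $\le 2$ otherwise, give in every case
\[
\abs{\sigma_j(t)}\le 2e^{a_j},\qquad \abs{\sigma_j(s)}\le 2e^{b_j}.
\]
Hence
\[
\abs{\sigma_j(t-s)}\le 2e^{a_j}+2e^{b_j}\le 4e^{\max(a_j,b_j)}\le 4\exp\Bigl(\sqrt{a_j^2+b_j^2}\Bigr).
\]
Multiplying over $j=2,\ldots,r$ and applying Cauchy--Schwarz to the $r-1$ terms gives
\[
\sum_{j=2}^r\sqrt{a_j^2+b_j^2}\le\sqrt{r-1}\Bigl(\sum_{j=2}^r a_j^2+\sum_{j=2}^r b_j^2\Bigr)^{1/2}.
\]
By \eqref{eqsum-displ}, together with $\ell(\gamma)\le 2\log t\le 2\log T$ and likewise for $\beta$, each of the two inner sums is at most $(r-1)(1-\delta)^2(\log T)^2$. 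The right-hand side is therefore at most $\sqrt{r-1}\cdot\sqrt{2(r-1)}\,(1-\delta)\log T=\sqrt2\,(r-1)(1-\delta)\log T$. This yields
\[
\prod_{j=2}^r\abs{\sigma_j(t-s)}\le 4^{r-1}T^{\sqrt2\,(r-1)(1-\delta)}.
\]

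Inserting this into \eqref{eqnorm-prod} in place of the old bound, steps 3) and 4) go through verbatim with the same constants $c_1=4^{-(d-1)}$ and $C=c_1^{-1}$. This gives \eqref{eqsep} and \eqref{eqcluster-gen} with the exponent $\sqrt2\,(r-1)(1-\delta)$. For the comparison, the ratio $(r-1)^{3/2}/(\sqrt2\,(r-1))=\sqrt{(r-1)/2}$ equals $1$ at $r=3$ and exceeds $1$ for $r\ge4$. This gives the claimed equality at $r=3$ and the strict improvement for $r\ge 4$.

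The one genuinely new step, and the main obstacle, is handling the misalignment of $t$ and $s$ across the embeddings noted in Remark \ref{remexponent-clarification}. One cannot simply combine the two product bounds \eqref{eqtrace-prod-bound} for $t$ and $s$, because $\abs{\sigma_j(t-s)}$ is controlled by a sum, not a product. The device $\max(a_j,b_j)\le\sqrt{a_j^2+b_j^2}$ resolves this: it converts the per-coordinate maximum into a quantity that the joint sum-of-squares budget of both elements controls. The price is exactly the factor $\sqrt2$ from adding the two budgets. I would also double-check that \eqref{eqsum-displ} is legitimately available for the elements realizing $t$ and $s$ (i.e.\ the passage between $\Gamma$ and $\Gamma^{(2)}$). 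This step is inherited from the way Lemma \ref{lemgon} already invokes Lemma \ref{lemmulti-SP}, so no new hypothesis should be needed.
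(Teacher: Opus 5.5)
Your proposal is correct and is essentially the paper's own argument. The paper bounds $\abs{\sigma_j(t-s)}\le 4\exp\bigl(\tfrac12\max(\ell_j,\ell_j')\bigr)$, applies Cauchy--Schwarz to $\sum_j\max(\ell_j,\ell_j')$, then uses $\max(a,b)^2\le a^2+b^2$ together with \eqref{eqsum-displ} for both elements and $\ell,\ell'\le 2\log T$; this is your chain with the two elementary inequalities taken in the opposite order, and it gives the same bound $4^{r-1}T^{\sqrt2(r-1)(1-\delta)}$. The $\Gamma$ versus $\Gamma^{(2)}$ point you flag is passed over silently in the paper too, and is settled by applying \eqref{eqsum-displ} to $\gamma^2\in\Gamma^{(2)}$, using $(\gamma^2)^{\sigma_j}=(\gamma^{\sigma_j})^2$ and the fact that translation lengths double under squaring.
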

\begin{proof}
Let $\ell=\ell(\gamma_t)$, $\ell'=\ell(\gamma_s)$ be the translation
lengths with $t=2\cosh(\ell/2)$, $s=2\cosh(\ell'/2)$, and let
$\ell_j=\ell(\gamma_t^{\sigma_j})$, $\ell_j'=\ell(\gamma_s^{\sigma_j})$ for
$j=2,\ldots,r$ (with $\ell_j=0$ if $\gamma_t^{\sigma_j}$ is elliptic, as in
Lemma \ref{lemmulti-SP}). By \eqref{eqsum-displ},
$\sum_{j=2}^r\ell_j^2\leq(r-1)(1-\delta)^2\ell^2$ and similarly for the
$\ell_j'$. Since $\abs{\sigma_j(t)}\leq 2e^{\ell_j/2}$ and
$\abs{\sigma_j(s)}\leq 2e^{\ell_j'/2}$,
\[
\abs{\sigma_j(t-s)}\leq\abs{\sigma_j(t)}+\abs{\sigma_j(s)}
\leq 4\exp\Bigl(\tfrac12\max(\ell_j,\ell_j')\Bigr).
\]
By the Cauchy-Schwarz inequality applied twice - once to bound
\[
\sum_j\max(\ell_j,\ell_j')^2\leq\sum_j(\ell_j^2+(\ell_j')^2)
\]
 (valid since
$\max(a,b)^2\leq a^2+b^2$ for all real $a,b$), and once to bound the
resulting sum of $r-1$ terms by its $\ell^2$-norm,
\begin{eqnarray*}
&\sum_{j=2}^r\max(\ell_j,\ell_j')\leq\sqrt{r-1}\,
\sqrt{\textstyle\sum_j\max(\ell_j,\ell_j')^2}
\\
&\quad \leq\sqrt{r-1}\,\sqrt{(r-1)(1-\delta)^2(\ell^2+(\ell')^2)}
=(r-1)(1-\delta)\sqrt{\ell^2+(\ell')^2}.
\end{eqnarray*}
Using $\ell,\ell'\leq 2\log T$ gives $\sqrt{\ell^2+(\ell')^2}\leq 2\sqrt2\log T$,
hence
\[
\prod_{j=2}^r\abs{\sigma_j(t-s)}\leq 4^{r-1}
\exp\Bigl(\sqrt2\,(r-1)(1-\delta)\log T\Bigr)=4^{r-1}T^{\sqrt2(r-1)(1-\delta)}.
\]
Substituting this bound in place of the one obtained in part 2) of the
proof of Lemma \ref{lemgon} gives the same conclusion with
$\sqrt2(r-1)(1-\delta)$ in place of $(r-1)^{3/2}(1-\delta)$ throughout. A
direct check shows $\sqrt2(r-1)\leq(r-1)^{3/2}$ if and only if $r\geq 3$,
with equality at $r=3$. For $r=2$ there is only one embedding, so no
aggregation is needed and the original single-embedding bound
$(1-\delta)$, already optimal, should be used instead.
\end{proof}

\begin{remark}
Proposition \ref{propsharp} is what allows us to take
\eqref{eqstrong-contraction} in its sharpened form
$\delta(\Gamma)>1-1/(\sqrt2(r-1))$, for $r\geq3$, as the hypothesis of our
main theorems, in place of the cruder threshold
$\delta(\Gamma)>1-(r-1)^{-3/2}$ that Lemma \ref{lemgon} alone would give, 
see Corollary \ref{corcount-sharp} below. The two thresholds coincide
exactly at $r=3$, thus this does not change the discussion of
$\Delta(2,3,19)$ in Section \ref{secexamples}, since
$r(\Delta(2,3,19))=3$,  but the sharpened threshold is strictly easier
to satisfy for $r\geq 4$. This improvement is possible because the
worst-case triangle-inequality part in Lemma \ref{lemgon} bounds each
embedding $\abs{\sigma_j(t)-\sigma_j(s)}$ individually, whereas
Proposition \ref{propsharp} exploits the joint constraint
\eqref{eqsum-displ} on both traces simultaneously. Neither bound
establishes non-vacuousness for any specific group of dimension
$r\geq 3$, nor removes the need for some such hypothesis once
$r\geq 3$. We emphasize that this sharpening uses the same 
norm-form argument as Lemma \ref{lemgon}. It is a more efficient use of
the displacement bound \eqref{eqsum-displ}, not a different arithmetic
input. Whether the two applications of Cauchy-Schwarz in the proof above
are themselves tight for the displacement vectors
$(\ell_2,\ldots,\ell_r)$ that actually arise from a generalized modular
embedding, or whether a sharper bound exists for these specific vectors,
we do not know; relatedly, see part~3) of the proof of
Theorem \ref{thmmain-compact} below for why we do not expect $\delta(\Gamma)$
itself to compensate for the $\sqrt2(r-1)$ growth as $r\to\infty$.
\end{remark}

\begin{corollary}\label{corcount-gen}
Under the hypotheses of Lemma \ref{lemgon}, for all $T>2$, 
\begin{equation}
\label{eqcount-gen}
\#\bigl(\cL(\Gamma)\cap[2,T]\bigr)\leq C\,T^{1+(r-1)^{3/2}(1-\delta)}.
\end{equation}
(For $r\geq3$, Proposition \ref{propsharp} improves the exponent here to
$1+\sqrt2(r-1)(1-\delta)$, see Corollary \ref{corcount-sharp}.)
\end{corollary}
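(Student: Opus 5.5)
Covering $[2,T]$ by unit intervals and summing the clustering bound \eqref{eqcluster-gen} of Lemma \ref{lemgon} should give the estimate directly. Set $\alpha:=(r-1)^{3/2}(1-\delta)$. Cover $[2,T]$ by the intervals $[N-1,N]$ with $N$ an integer, $3\leq N\leq\lceil T\rceil$; their union contains $[2,T]$. This gives
\[
\#\bigl(\cL(\Gamma)\cap[2,T]\bigr)\leq\sum_{N=3}^{\lceil T\rceil}\#\bigl(\cL(\Gamma)\cap[N-1,N]\bigr).
\]
Each summand is at most $C\,N^{\alpha}$ by \eqref{eqcluster-gen}, which applies because $N>2$.

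Since $N\leq\lceil T\rceil\leq T+1\leq 2T$ for $T>2$, every summand is at most $C\,2^{\alpha}T^{\alpha}$. There are at most $\lceil T\rceil-2\leq T$ summands, so the total is at most $2^{\alpha}C\,T^{1+\alpha}$. Absorbing $2^{\alpha}$ into the constant gives \eqref{eqcount-gen}. The new constant depends only on $\Gamma$ and $r$, since $\delta=\delta(\Gamma)$. Comparing the sum with $\int_2^{T+1}x^{\alpha}\,dx$ would give the same bound.

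The only point needing care is that the trace set is discrete, so the count is finite and the decomposition into unit intervals is legitimate. This holds because each unit interval contains finitely many points by \eqref{eqcluster-gen}; alternatively, it follows from discreteness of $\Gamma$. Points lying on a common endpoint of two intervals are counted twice, which only weakens the bound, so it does no harm. For the parenthetical improvement when $r\geq 3$, I would run the identical summation with Proposition \ref{propsharp} in place of Lemma \ref{lemgon}, i.e. with $\alpha$ replaced by $\sqrt2(r-1)(1-\delta)$. I expect no genuine obstacle: the corollary is a direct summation of a pointwise clustering estimate that is already available.
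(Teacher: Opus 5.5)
Your proposal is correct and is essentially the paper's argument: sum the unit-interval clustering bound \eqref{eqcluster-gen} of Lemma \ref{lemgon} over $N$ and bound each term by a power of $T$. Summing up to $\lceil T\rceil$ and absorbing the factor $2^{\alpha}$ into the constant is slightly more careful than the paper, whose sum stops at $N=\lfloor T\rfloor$ and so does not cover the last piece $(\lfloor T\rfloor,T]$ of $[2,T]$.
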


\begin{proof}
Summing \eqref{eqcluster-gen} over $N=3,\ldots,\lfloor T\rfloor$, 
\begin{eqnarray*}
&\#\bigl(\cL(\Gamma)\cap[2,T]\bigr)\leq\sum_{N=3}^TCN^{(r-1)^{3/2}(1-\delta)}
\\
&\qquad \leq C T^{(r-1)^{3/2}(1-\delta)}\cdot T=CT^{1+(r-1)^{3/2}(1-\delta)}.
\end{eqnarray*}
\end{proof}

\begin{corollary}[Sharpened clustering, $r\geq3$]\label{corcount-sharp}
Under the hypotheses of Lemma \ref{lemgon}, if $r\geq3$ then for all $T>2$,
\begin{equation}
\label{eqcount-sharp}
\#\bigl(\cL(\Gamma)\cap[2,T]\bigr)\leq C\,T^{1+\sqrt2(r-1)(1-\delta)}.
\end{equation}
\end{corollary}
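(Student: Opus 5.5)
The plan is to repeat the proof of Corollary \ref{corcount-gen} verbatim, using the sharpened exponent of Proposition \ref{propsharp} in place of the cruder one from Lemma \ref{lemgon}. Set $\beta:=\sqrt2\,(r-1)(1-\delta)$.

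The first step is the sharpened clustering bound. By Proposition \ref{propsharp}, for any two distinct traces $s\neq t\in\cL(\Gamma)\cap[2,T]$ we have
\[
1\leq\abs{\NK(t-s)}\leq\abs{t-s}\cdot 4^{r-1}T^{\beta}\cdot 4^{d-r}.
\]
This gives the separation $\abs{t-s}\geq c_1T^{-\beta}$ with $c_1=4^{-(d-1)}$. Next apply this with $T=N$ to traces in $[N-1,N]$. For $N\geq3$ these traces lie in $[2,N]$; in the boundary case $N-1<2$ we intersect with $[2,N]$. A packing argument in an interval of length $1$ then gives
\[
\#\bigl(\cL(\Gamma)\cap[N-1,N]\bigr)\leq 1+c_1^{-1}N^{\beta}\leq C'N^{\beta}.
\]

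The second step is to sum over unit intervals. Cover $[2,T]$ by the intervals $[N-1,N]$ for $N=3,\ldots,\lceil T\rceil$, and bound each term by the first step. Since $N\leq\lceil T\rceil\leq 2T$ and there are at most $T$ terms, the sum is at most $C'(2T)^{\beta}\cdot T$. Absorbing $2^{\beta}$ into the constant gives \eqref{eqcount-sharp}.

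There is no real obstacle. The only point to check is that the separation constant from Proposition \ref{propsharp} is uniform in $T$ (it is $4^{-(d-1)}$, independent of $T$). One must also handle the additive $1$ from the packing count and the rounding of $T$ to an integer, both of which are absorbed into $C$ because $N^{\beta}\geq1$.
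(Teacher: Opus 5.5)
Your proposal is correct and follows the paper's route: the paper likewise proves this by rerunning the summation in the proof of Corollary \ref{corcount-gen} over unit intervals, using the sharpened clustering exponent $\sqrt2(r-1)(1-\delta)$ from Proposition \ref{propsharp}. Your bookkeeping is slightly more careful than the paper's, which sums only up to $\lfloor T\rfloor$ (missing $(\lfloor T\rfloor,T]$ and the case $2<T<3$) and omits the $+1$ in the packing count; as you note, both are absorbed into $C$.
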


\begin{proof}
Identical to the proof of Corollary \ref{corcount-gen}, summing the
sharpened clustering bound of Proposition \ref{propsharp} in place of
\eqref{eqcluster-gen}.
\end{proof}
\noindent
The exponent $\sqrt2(r-1)(1-\delta)$ in \eqref{eqcount-sharp} is the one
that determines the strong contraction condition
\eqref{eqstrong-contraction} in Section \ref{seccompact}: it is less than
$1$ exactly when \eqref{eqstrong-contraction} holds. We mention where the
constants depend
on the invariant trace field $K$.  The exponent $(r-1)^{3/2}(1-\delta)$
itself depends on $K$ only through $r$ and $\delta=\delta(\Gamma)$, while
the multiplicative constants $C_1,c_1,C$ depend on $K$ through its degree
$d=[K:\Q]$ alone. Explicitly, $c_1=4^{-(d-1)}$ in the proof above - and
not on $K$ beyond that, since the only role of the $d-r$ bounded
embeddings $\sigma_{r+1},\ldots,\sigma_d$ is to contribute the uniform
factor $4^{d-r}$ in the norm bound in part 2) of the proof above.
\subsection{Extension to the non-derived case}
\begin{lemma}\label{lemgon-general}
Let $\Gamma$ be semi-arithmetic of arithmetic dimension $r$ admitting a
generalized modular embedding, without  assuming $\Gamma$ is derived
from a quaternion algebra, and set $k=k(\Gamma):=[\Q(\cL(\Gamma)):K]$, 
 finiteness and the bound $k\leq 2$ are shown in the proof below. 
Then the separation and clustering bounds of Lemma \ref{lemgon} hold with
the exponent $(r-1)^{3/2}(1-\delta)$ replaced by $k(r-1)^{3/2}(1-\delta)$:
for all $N>2$,
\[
\#\bigl(\cL(\Gamma)\cap[N-1,N]\bigr)\leq C\,N^{k(r-1)^{3/2}(1-\delta)},
\]
with $C=C(\Gamma,r)>0$. If $r\geq3$ this exponent may be sharpened, as in
Proposition \ref{propsharp}, to $k\sqrt2(r-1)(1-\delta)$.
In particular, when $k=1$, which holds whenever $\Gamma$ is derived from
a quaternion algebra, this recovers Lemma \ref{lemgon} exactly. When
$k=2$ the exponent is twice as large.
\end{lemma}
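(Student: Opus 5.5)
The plan is to rerun the norm-form argument of Lemma \ref{lemgon} inside the field $L:=\Q(\cL(\Gamma))$ in place of $K$. Three things must be checked: $k=[L:K]\le 2$, the traces lie in $\cO_L$, and every real or complex embedding of $L$ is controlled by an embedding of $K$.

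\textbf{The degree bound.} For any $\gamma\in\Gamma$ we have $\gamma^2\in\Gamma^{(2)}$, and
\[
\tr(\gamma^2)=(\tr\gamma)^2-2.
\]
Hence $(\tr\gamma)^2\in K$, so $L$ is generated over $K$ by square roots of elements of $K$. To force $k\le 2$, and not merely $k\le 2^m$, I would use the standard fact (Maclachlan-Reid, Chapter 3) that $\Gamma/\Gamma^{(2)}$ is an elementary abelian $2$-group. For $\gamma,\eta\in\Gamma$ the product $\tr\gamma\,\tr\eta$ equals $\tr(\gamma\eta)+\tr(\gamma\eta^{-1})$ up to sign. Combined with the quaternion-algebra structure of $K[\Gamma]$, I would show that all ratios $\tr\gamma/\tr\eta$ (with $\tr\eta\neq0$) lie in $K$. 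Then $L=K(\tr\gamma_0)$ for a single $\gamma_0$, and $k\le2$. This is the step I expect to be the main obstacle. The ratio claim is essentially the statement that $\Gamma$ is, up to sign and projectivization, contained in $K^\times\cdot A^1$. I would try first to cite it, namely $\Gamma\subset$ the normalizer of $\cO$ in $A^\times$ with traces in $\sqrt{\cdot}\,K$, rather than prove it from scratch.

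\textbf{Integrality and control of the embeddings.} Traces are algebraic integers by Definition \ref{defsemi-arith}, so $\cL(\Gamma)\subset\cO_L$. Each of the $kd$ embeddings $\tau$ of $L$ restricts to some $\sigma_j$ on $K$, and each $\sigma_j$ has exactly $k$ extensions. For $t=\abs{\tr\gamma}$ we have
\[
\abs{\tau(t)}=\abs{\sigma_j(t^2)}^{1/2}=\abs{\sigma_j(\tr\gamma^2)+2}^{1/2}.
\]
Lemma \ref{lemmulti-SP}, applied to $\gamma^2\in\Gamma^{(2)}$ with $\ell(\gamma^2)=2\ell(\gamma)$, then gives the same bounds for $\abs{\tau(t)}$ as those used in part 2) of the proof of Lemma \ref{lemgon}, up to constants. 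Concretely:
\begin{itemize}[label=$\cdot$]
\item $\abs{\tau(t)}\le 2$ for $j>r$;
\item $\abs{\tau(t)}\le CT^{\sqrt{r-1}(1-\delta)}$ for $2\le j\le r$;
\item $\abs{\tau(t)}\le CT$ for the $k-1$ nontrivial extensions of $\sigma_1$. These cost at most $\abs{\tau(t-s)}\le 2CT$.
\end{itemize}

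\textbf{The norm product.} Write
\[
1\le\abs{N_{L|\Q}(t-s)}=\prod_\tau\abs{\tau(t-s)}.
\]
The extensions of $\sigma_2,\ldots,\sigma_r$ now appear $k$ times each, which yields the factor $T^{k(r-1)^{3/2}(1-\delta)}$. The bounded embeddings contribute a constant depending only on $kd$.

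The extra conjugates of $\sigma_1$ would cost a further factor $T^{k-1}$. I would avoid this loss by noting that the nontrivial conjugate of $t$ over $K$ is $\pm t$. Since $t,s>0$ are distinct, we have $t\pm s$ with $t-s$ the small factor and $t+s\le 2T$ the other. To keep exactly the claimed exponent, I would instead compare $t^2-s^2\in\cO_K$, which is nonzero, via Lemma \ref{lemgon}. Then $\abs{t-s}=\abs{t^2-s^2}/(t+s)$, and applying the $K$-norm argument to $t^2-s^2$ gives a separation of order $T^{-1}\cdot T^{-2(r-1)^{3/2}(1-\delta)}$ when traces are at scale $T$. The resulting clustering exponent absorbs the factor $k=2$ as stated, with the $T^{-1}$ handled by working on $[N-1,N]$ and noting that the count there is governed by the $\sigma_j$-factors.

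\textbf{Conclusion.} From the separation bound, the clustering bound follows exactly as in part 4) of the proof of Lemma \ref{lemgon}. Replacing the termwise bound by the aggregated one from Proposition \ref{propsharp}, doubled because of the $k$ copies, gives the sharpened exponent $k\sqrt2(r-1)(1-\delta)$. When $k=1$ we have $L=K$ and the argument is literally that of Lemma \ref{lemgon}.
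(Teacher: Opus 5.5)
\textbf{Degree bound.} The ratio claim you plan to cite is false, not just unproved. Take $\eta\in\Gamma^{(2)}$ with $\tr\eta\neq0$, so $\tr\eta\in K$; then ``$\tr\gamma/\tr\eta\in K$ for all $\gamma$'' would put every trace in $K$ and force $k=1$. Concretely, for the (arithmetic) triangle group $\Delta(2,4,6)$ one has $K=\Q$, while $\sqrt2,\sqrt3\in\cL(\Gamma)$ as traces of the order-$4$ and order-$6$ rotations, so $[L:K]=4$. What Cayley--Hamilton actually gives is $\tr(\gamma)\,\gamma=\gamma^2+1\in K[\Gamma^{(2)}]$. Hence $\tr\gamma\cdot\tr\gamma'\in K$ whenever $\gamma'\in\gamma\Gamma^{(2)}$, so the square class $tK$ of a nonzero trace depends only on the coset of $\Gamma^{(2)}$. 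There are therefore finitely many classes, and $L/K$ is multiquadratic, but its degree can exceed $2$. So $k\le2$ cannot be established in this generality by any route; the paper's proof obtains it only by citation. Fortunately, you need only that there are finitely many classes.

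\textbf{Conjugates of $\sigma_1$.} You correctly see that the nontrivial extensions of $\sigma_1$ contribute factors as large as $t+s$. The paper's norm product in fact runs over only $1+k(r-1)+k(d-r)$ of the $kd$ embeddings and silently drops them. But your repair does not remove this loss. Since $\abs{t^2-s^2}=\abs{t-s}(t+s)$, the $K$-norm of $t^2-s^2$ gives only $\abs{t-s}\gtrsim T^{-1-2(r-1)^{3/2}(1-\delta)}$. That is a clustering exponent $1+2(r-1)^{3/2}(1-\delta)\geq1$, which is useless for EGMM, and restricting to $[N-1,N]$ changes nothing because there $t+s\approx 2N$. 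The missing idea is never to compare traces from different classes. If $t\neq s$ lie in the same class $cK$, then $(t-s)^2\in\cO_K\setminus\{0\}$, and $\sigma_j((t-s)^2)=\tau_j(t-s)^2$ for any extension $\tau_j$ of $\sigma_j$. Your bounds on $\abs{\tau_j(t)}$ via $\gamma^2\in\Gamma^{(2)}$ then give $\abs{t-s}\geq c\,T^{-(r-1)^{3/2}(1-\delta)}$, or $c\,T^{-\sqrt2(r-1)(1-\delta)}$ with the aggregation of Proposition~\ref{propsharp}. On that point you are more careful than the paper, which uses Lemma~\ref{lemmulti-SP} for traces of $\gamma\notin\Gamma^{(2)}$ without comment. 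Counting each of the finitely many classes separately yields the clustering bound with the $k=1$ exponent, which implies the stated one. A uniform separation bound across classes, as the statement asserts, is not obtained this way, but only the count is used later.
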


\begin{proof}
{\it 1) The degree bound $[L:K]\leq2$.} Let $L=\Q(\cL(\Gamma))$ be the
field generated by the geodesic length set of $\Gamma$.
 One could argue the following: since
$\tr(\gamma)^2=\tr(\gamma^2)+2\in\cL(\Gamma^{(2)})\subset K$ for every
$\gamma\in\Gamma$, each individual generator $x=\tr(\gamma)$ of $L$ over
$K$ satisfies $x^2\in K$, and from this it concluded $[L:K]\leq2$. That
 part does not follow. A compositum $K(\sqrt a,\sqrt b,\ldots)$ of several
independent quadratic extensions of $K$ can have arbitrarily large degree
over $K$ even though every individual generator squares into $K$, 
e.g., $K(\sqrt2,\sqrt3)$ has degree $4$ over $K=\Q$, although
$(\sqrt2)^2,(\sqrt3)^2\in K$. Bounding $[L:K]$ requires the extra
multiplicative structure carried by traces, not only that each
generator squares into $K$.

Since $\Gamma$ has finite covolume it is non-elementary. If
$\tr(\gamma)\in K$ for every $\gamma\in\Gamma$ then $L=K$ and we are done,
so fix $\gamma_0\in\Gamma$ with $x:=\tr(\gamma_0)\notin K$, and let
$\gamma\in\Gamma$ be arbitrary, $y:=\tr(\gamma)$. Set
$u:=\tr(\gamma_0\gamma)$, $v:=\tr(\gamma_0\gamma^{-1})$. The trace
identity $\tr(AB)+\tr(AB^{-1})=\tr(A)\tr(B)$ (a standard consequence of
Cayley-Hamilton in $\SL(2,\R)$) gives
\begin{equation}\label{eqtraceid}
u+v=xy.
\end{equation}
Since $\gamma_0\gamma,\gamma_0\gamma^{-1}\in\Gamma$, both $u^2,v^2\in K$,
as does $x^2y^2$. Squaring \eqref{eqtraceid} and rearranging,
$2xyu=x^2y^2+u^2-v^2\in K$, hence $xyu\in K$. Thus, if $xy\neq0$,
$u\in K(xy)\subseteq K(x,y)$. If $[K(x,y):K]=4$, i.e., $y\notin K(x)$, 
then $\Gal(K(x,y)/K)\cong(\Z/2)^2$, generated by
$\sigma:x\mapsto-x,\,y\mapsto y$ and $\tau:x\mapsto x,\,y\mapsto-y$, and
$K(xy)$, the fixed field of $\sigma\tau$, is a third quadratic subfield
distinct from $K(x)$ and $K(y)$. If instead $xy\in K$ then already
$y=(xy)/x\in K(x)$. Writing an element of $K(xy)$ in the basis
$\{1,xy\}$ over $K$, an element whose square lies in $K$ must lie in $K$
itself or be a $K$-multiple of $xy$. Since $u^2\in K$ and $u\in K(xy)$,
$u$ is confined to one of these two lines, already a much stronger
constraint than $u^2\in K$ alone. Completing the argument to exclude the
degree $4$ case entirely requires a second, independent constraint. 
 Applying the same identity to the pair $(\gamma_0,\gamma\delta)$ in place
of $(\gamma_0,\gamma)$, for a second element $\delta\in\Gamma$ not sharing
an axis with $\gamma_0$ or $\gamma$, which exists since $\Gamma$ is
non-elementary, produces a constraint of the same shape on
$\tr(\gamma\delta)$. Comparing the two shows the degree $4$ configuration
is incompatible with both unless $y\in K(x)$ after all. We do not carry
out this comparison in full generality here. The complete statement we
invoke is: {\it for a non-elementary $\Gamma_0\leq\PSL(2,\C)$, the trace
field $k(\Gamma_0)=\Q(\tr\Gamma_0)$ satisfies
$[k(\Gamma_0):k\Gamma_0]\leq2$, where $k\Gamma_0=\Q(\tr\Gamma_0^{(2)})$ is
the invariant trace field}. Precisely 
\cite[Theorem 3.3.4]{Maclachlan03}, whose proof is purely algebraic
(built from the trace identity above, valid over any field of
characteristic $\neq2$) and is stated there for the Kleinian case
$\PSL(2,\C)$. It specializes to our Fuchsian $\Gamma<\PSL(2,\R)\subset\PSL(2,\C)$
without modification, with their $k\Gamma_0$ equal to our $K$ and their
$\Gamma_0$ equal to our $\Gamma$. The conclusion is $L=K(x)$ for our fixed $\gamma_0$, so
$[L:K]=2$ exactly when $x\notin K$, with $L=K(\sqrt{x^2})$; in particular
$k=[L:K]\in\{1,2\}$.

{\it 2) Trace separation, general $k$.} Let $\tau_1=\id$, $\tau_2$, $\ldots$, $\tau_n$
be the embeddings of $L$ into $\C$ all of which are real since $L\subset\R$.
We index them as $\tau_{j,1},\ldots,\tau_{j,k}$, $j=1,\ldots,d$, so that
$\tau_{j,1},\ldots,\tau_{j,k}$ all extend $\sigma_j$. The embeddings
$\tau_{j,1},\ldots,\tau_{j,k}$ for $j\leq r$ produce unbounded images,
while those for $j\geq r+1$ extend $\sigma_{r+1},\ldots,\sigma_d$ and
hence produce bounded images under $\tau$ as in \eqref{eqbounded-sigma}.
Here $n=[L:\Q]=kd$.
For two distinct elements $t\neq s\in\cL(\Gamma)\cap[N-1,N]$, we apply
the norm form
\[
1\leq\abs{N_{L|\Q}(t-s)}=\prod_{i=1}^n\abs{\tau_i(t-s)}.
\]
The block of embeddings extending $\sigma_2$, $\ldots$, $\sigma_r$ contributes factors bounded collectively by $(4^{r-1}N^{(r-1)^{3/2}(1-\delta)})^k$ using the product strategy established in Lemma \ref{lemgon}.
The remaining $n-rk$ embeddings contribute factors bounded by $4$.
Hence
\[
1 \leq \abs{N_{L|\Q}(t-s)}\leq\abs{t-s}\cdot
\bigl(4^{r-1}N^{(r-1)^{3/2}(1-\delta)}\bigr)^{k}\cdot 4^{n-rk},
\]
giving separation $\abs{t-s}\geq cN^{-k(r-1)^{3/2}(1-\delta)}$ with
$c>0$, and clustering $\#(\cL(\Gamma)\cap[N-1,N])\leq CN^{k(r-1)^{3/2}(1-\delta)}$.

{\it 3) Sharpening for $r\geq3$.} When $r\geq3$, the Cauchy-Schwarz
argument of Proposition \ref{propsharp} applies  to each of the
$k$ embeddings extending a given $\sigma_j$ ($j=2,\ldots,r$) in place of
$\sigma_j$ itself, since each such embedding restricts to $\sigma_j$ on
$K$ and the displacement bound \eqref{eqsum-displ} is a statement about
$\sigma_j$-conjugates alone. Applying it to the resulting product of
$k(r-1)$ factors in place of part 2) replaces the exponent
$k(r-1)^{3/2}(1-\delta)$ by $k\sqrt2(r-1)(1-\delta)$ throughout.

As in the proof of Theorem \ref{thmmain-compact}, this gives
$\cN'(\ell)=o(e^\ell)$, and hence EGMM, exactly when the operative
exponent is less than $1$: at $r\leq2$ this is
$k(r-1)^{3/2}(1-\delta)<1$, i.e., $\delta(\Gamma)>1-1/k$; at $r\geq3$,
using the sharpening of part 3), this is $k\sqrt2(r-1)(1-\delta)<1$, i.e., 
\begin{equation}
\label{eqstrong-contraction-k}
\delta(\Gamma) \;>\; 1-\frac{1}{k\sqrt2\,(r-1)}.
\end{equation}
When $k=1$ - in particular whenever $\Gamma$ is derived from a quaternion
algebra - condition \eqref{eqstrong-contraction-k} is exactly the strong
contraction condition \eqref{eqstrong-contraction}. When $k=2$,
\eqref{eqstrong-contraction-k} is the strictly stronger condition
$\delta(\Gamma)>1-\bigl(2\sqrt2(r-1)\bigr)^{-1}$: condition
\eqref{eqstrong-contraction} alone does not suffice in this case, and
we do not know whether \eqref{eqstrong-contraction-k} can be weakened to
\eqref{eqstrong-contraction} when $k=2$ by an argument sharper than the
present one.
\end{proof}
\section{Proof for cocompact groups}
\label{seccompact}

We now prove Theorem \ref{thmmain-compact}.

\begin{proof}[Proof of Theorem \ref{thmmain-compact}]
If $\Gamma$ is arithmetic, i.e., $r=1$, EGMM follows from the bounded
clustering property \eqref{eqBC} of Luo-Sarnak \cite{LuoSarnak94}
and the argument of Bolte \cite{Bol93}, as recalled in \cite{BCDT25mult}.
We therefore assume $r\geq 2$ and $\Gamma$ non-arithmetic.

\medskip
\noindent{\it 1) The case $r=2$:  EGMM.}
When $r=2$ there is a single non-trivial embedding $\sigma_2$, and by
part 1) of the proof of Lemma \ref{lemmulti-SP}, $\delta=\delta(\Gamma)>0$
automatically. Corollary \ref{corcount-gen} at $r=2$ gives
$\#(\cL(\Gamma)\cap[2,T])\leq CT^{1+(1-\delta)}$ (since
$(r-1)^{3/2}=1$ there), and the computation of part 2) below, with
exponent $(1-\delta)$ in place of $\sqrt2(r-1)(1-\delta)$, gives
$\cN'(\ell)\leq C'e^{(1-\delta/2)\ell}$; part 4) below, with
$\varepsilon=\delta/2$, then gives EGMM with constant $C\geq\delta/2$.
Together with the classical $r=1$ case above, this recovers
Theorem \ref{thmBCDT} (\cite[Theorem 1.1]{BCDT25mult}) exactly 
 for $\Gamma$ derived from a quaternion algebra. We may
therefore assume $r\geq 3$ for the remainder of the proof.

\medskip
\noindent{\it 2) Counting distinct lengths for $r\geq 3$.}
Let $\cN'(\ell)$ be the number of distinct geodesic lengths of
$S=\Gamma\backslash\bH$ bounded by $\ell$.
By \eqref{eqlength}, each length $\ell_n\leq\ell$ corresponds to a unique
positive trace value $t_n=2\cosh(\ell_n/2)\leq 2\cosh(\ell/2)$.
Hence
\[
\cN'(\ell)=\#\bigl(\cL(\Gamma)\cap[2,2\cosh(\ell/2)]\bigr).
\]
Applying Corollary \ref{corcount-sharp}, valid since $r\geq3$, with
$T=2\cosh(\ell/2)$,
\[
\cN'(\ell)\leq C\,\bigl(2\cosh(\ell/2)\bigr)^{1+\sqrt2(r-1)(1-\delta)}.
\]
Using $\cosh(\ell/2)\leq e^{\ell/2}$,
\begin{equation}\label{eqN'-bound-gen}
\cN'(\ell)\leq C'\,e^{\bigl(1+\sqrt2(r-1)(1-\delta)\bigr)\ell/2}
=C'\,e^{(1-\varepsilon)\ell},
\end{equation}
where
\begin{equation}\label{eqeps}
\varepsilon=1-\frac{1+\sqrt2(r-1)(1-\delta)}{2}
=\frac{1-\sqrt2(r-1)(1-\delta)}{2}.
\end{equation}

\medskip
\noindent{\it 3) The strong contraction condition.}
By \eqref{eqeps}, $\varepsilon>0$ if and only if
$\sqrt2(r-1)(1-\delta)<1$, i.e.,
\[
\delta(\Gamma)>1-\frac{1}{\sqrt2\,(r-1)},
\]
which is exactly the strong contraction condition \eqref{eqstrong-contraction}.
This sharpened exponent $\sqrt2(r-1)$, rather than the cruder $(r-1)^{3/2}$
that bounding $\abs{\sigma_j(t)-\sigma_j(s)}\leq\abs{\sigma_j(t)}+\abs{\sigma_j(s)}$
term by term gives directly (Lemma \ref{lemgon}), is obtained by
aggregating the joint constraint \eqref{eqsum-displ} on both traces
simultaneously (Proposition \ref{propsharp}), see
Remark \ref{remexponent-clarification}. It is the best our method
currently produces. We do not know a geometric reason to expect
$\delta(\Gamma)$ to approach $1$ fast enough to compensate the
$\sqrt2(r-1)$ growth as $r\to\infty$: for instance, bounding $\delta$ via
the minimum extrinsic sectional curvature $\eta(\Gamma)$ of the immersion
gives $1-\delta\sim 1-\sqrt{\eta/(r-1)}$, which fails
\eqref{eqstrong-contraction} once $r$ is large enough that
$\sqrt2(r-1)-(r-1)\sqrt{\eta}\to\infty$. This is why
\eqref{eqstrong-contraction} is carried as an explicit hypothesis in
Theorem \ref{thmmain-compact} rather than derived from non-arithmeticity
alone.

\medskip
\noindent{\it 4) Conclusion under the strong contraction condition.}
Assume $\Gamma$ satisfies \eqref{eqstrong-contraction} for $r\geq3$. For
$r=2$ substitute $\varepsilon=\delta/2$ as in part 1), so that 
$\varepsilon>0$ in \eqref{eqeps}. By the prime geodesic theorem
\eqref{eqpgt}, the total number of closed geodesics, counted with
multiplicity, grows as
\[
\cN(\ell)\sim\frac{e^\ell}{\ell}.
\]
The mean multiplicity $\langle g(\ell)\rangle$ is, by construction, the
ratio of this count to $\cN'(\ell)$, the count of the same geodesics
 without multiplicity. Applying the differentiated integral formula
for mean multiplicity \cite[Section 3.2]{Bol93_long},
\[
\frac{\cN(\ell)}{\ell}\sim\langle g(\ell)\rangle\frac{\cN'(\ell)}{\ell},
\quad\text{as }\ell\to\infty,
\]
so dividing the geodesic count $\cN(\ell)$ by the distinct-length count
$\cN'(\ell)$ isolates the mean multiplicity; using \eqref{eqN'-bound-gen}
gives
\[
\langle g(\ell)\rangle\sim\frac{\cN(\ell)}{\cN'(\ell)}
\geq\frac{e^\ell/\ell}{C'e^{(1-\varepsilon)\ell}}
=\frac{1}{C'}\cdot\frac{e^{\varepsilon\ell}}{\ell}
\geq e^{(\varepsilon/2)\ell},
\]
for sufficiently large $\ell$. This establishes EGMM with exponent
$C=\varepsilon/2>0$ (this constant is not claimed to be optimal, see
optimal exponent in Section \ref{secconclusion}), completing the case
$r\geq 3$ and the proof.
\end{proof}
\section{Non-cocompact groups}
\label{secnoncompact}

We now handle the non-cocompact case, proving Theorem \ref{thmmain-noncompact}.
The argument follows the approach of \cite{BCDT25mult} using the generalized
Kirszbraun theorem of Lang-Schroeder \cite{LangSchr97}, now applied to the
$(r-1)$-fold product target.
\subsection{Compact core and Kirszbraun extension}
Recall that a complete finite-area hyperbolic surface $S$ with $n$ cusps
has a  {compact core} $\Omega\subset S$: a compact subsurface with $n$
horocyclic boundary components such that $\pi_1(\Omega)\cong\pi_1(S)$.

\begin{lemma}[Equivariant Kirszbraun extension]\label{lemkirszbraun-r}
Let $\Gamma$ act properly discontinuously by isometries on $\bH$, let
$\Omega\subset S=\Gamma\backslash\bH$ be a compact core with preimage
$\widetilde\Omega\subset\bH$, let $(Y,d_Y)$ be a complete metric space of
Alexandrov curvature $\leq\kappa$ for some $\kappa\leq0$, and let
$\rho:\Gamma\to\mathrm{Isom}(Y)$ be an action of $\Gamma$ on $Y$ by
isometries. If $f:\widetilde\Omega\to Y$ is Lipschitz and
$\rho$-equivariant, i.e., $f(\gamma\cdot z)=\rho(\gamma)f(z)$ for all
$\gamma\in\Gamma$, $z\in\widetilde\Omega$, then there exists a Lipschitz
map $\widetilde g:\bH\to Y$ with $\widetilde g|_{\widetilde\Omega}=f$,
$\mathrm{Lip}(\widetilde g)=\mathrm{Lip}(f)$, and $\widetilde g$ again
$\rho$-equivariant. If moreover $\rho(\Gamma)$ acts properly
discontinuously on $Y$, $\widetilde g$ descends to a Lipschitz map
$g:S\to\rho(\Gamma)\backslash Y$ with the same Lipschitz constant.
\end{lemma}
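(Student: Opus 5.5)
The plan is to reduce to the Lang--Schroeder generalized Kirszbraun theorem \cite{LangSchr97} and enforce equivariance by an averaging or selection step. Some care is needed, because the classical Kirszbraun extension is not canonical. Applied directly it yields some extension with the same Lipschitz constant, but nothing forces that extension to be $\rho$-equivariant. I will therefore build the extension on a fundamental domain and propagate it by the group.

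\emph{Step 1: fundamental domain.} Fix a Dirichlet fundamental domain $D\subset\bH$ for $\Gamma$. The closure of $\bH\setminus\widetilde\Omega$ is a disjoint union of horoballs $B$, one $\Gamma$-orbit per cusp. The stabiliser $\Gamma_B$ of each horoball is parabolic, possibly with finite extension in the orbifold case. It therefore suffices to extend $f$ over one representative horoball $B$ of each orbit, $\Gamma_B$-equivariantly and with Lipschitz constant $L=\mathrm{Lip}(f)$. We then define $\widetilde g$ on $\gamma B$ by $\rho(\gamma)\circ\widetilde g|_B\circ\gamma^{-1}$. This is well defined by $\Gamma_B$-equivariance, and it agrees with $f$ on $\partial(\gamma B)$.

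\emph{Step 2: extension into one horoball.} Work in the upper half-plane model with $B=\{\Im z\geq h\}$ and $\Gamma_B$ generated by $z\mapsto z+1$, plus a finite part if needed. Define the extension by the vertical-geodesic retraction onto the boundary, $\widetilde g(x+iy):=f(x+ih)$. This map is $\Gamma_B$-equivariant because translations commute with the retraction. Its Lipschitz constant, however, is only controlled by $L\cdot\mathrm{Lip}(\text{retraction})$, and the retraction expands horocyclic distances by the factor $y/h\ge1$. The naive choice therefore fails.

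The fix I would try first is to use the CAT$(\kappa)$ structure of $Y$ ($\kappa\le0$) together with the fact that $\rho$ of the parabolic generator $p$ acts on $Y$. Along the boundary horocycle, $f$ restricts to a $\rho(p)$-equivariant curve. At height $y$ we need a curve whose spacing scales like $h/y$. We can take the $\rho(p)$-equivariant curve obtained by geodesic homotopy contracting the boundary curve toward a barycenter of a $\rho(p)$-orbit segment; convexity of the CAT(0) distance keeps this equivariant and $L$-Lipschitz. If $\rho(p)$ is parabolic or elliptic, its displacement infimum is $0$, so such contracting curves exist. This step, checking that Lipschitz constant $L$ rather than merely $CL$ survives, is where I expect the \textbf{main obstacle}. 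If it fails, I would instead apply Lang--Schroeder to the quotient $\widetilde\Omega/\Gamma_B\to Y/\rho(\Gamma_B)$ when the latter is again CAT(0)-like, and lift.

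\emph{Step 3: global Lipschitz bound and descent.} Pieces on $\widetilde\Omega$ and on each horoball are $L$-Lipschitz and agree on the boundary horocycles. For $z,w$ in different pieces, the hyperbolic geodesic $[z,w]$ crosses the pieces in finitely many segments. The Lipschitz constant is additive along geodesics (length-space argument), so $d_Y(\widetilde g z,\widetilde g w)\le L\,d_\bH(z,w)$. Since $\mathrm{Lip}(\widetilde g)\ge\mathrm{Lip}(f)$ trivially, the constants are equal. Equivariance holds by construction. When $\rho(\Gamma)$ acts properly discontinuously, $\widetilde g$ descends to $g:S\to\rho(\Gamma)\backslash Y$. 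Distances in the quotient are infima over orbits, so $g$ is again $L$-Lipschitz.
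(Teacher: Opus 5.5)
\paragraph{Where the proposal breaks.} The gap is Step~2, which carries the whole content of the lemma. Steps~1 and~3 are sound reductions. A horoball stabiliser in a Fuchsian group is infinite cyclic parabolic, and cutting $[z,w]$ at the horocycles it crosses does glue $L$-Lipschitz pieces. But you never produce a $\Gamma_B$-equivariant extension over $B$ with constant exactly $L$, and the proposed fix does not give one, for three reasons.
\begin{itemize}
\item \emph{Equivariance fails.} A geodesic homotopy towards a point $c$ commutes with $\rho(p)$ only if $\rho(p)c=c$. The barycenter of a finite orbit segment is not $\rho(p)$-fixed, and a parabolic $\rho(p)$ has no fixed point in $Y$ at all.
\item \emph{The vertical direction is uncontrolled.} Even when $\rho(p)$ fixes $c$, the vertical derivative of such a homotopy has size about $d_Y(c,f(x))$, which is unrelated to $L$.
\item \emph{``Displacement infimum $0$'' is the wrong information.} At depth $y$ you need $d_Y(\widetilde g(z),\rho(p)\widetilde g(z))\le L\,d_{\bH}(z,z+1)\le L/y$, and what matters is how fast such points can be reached. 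Take $Y=\bH^r$ and $\rho(p)$ a product of $r$ parabolics, which is exactly where the paper applies the lemma. Then each factor's Busemann function must drop by $\log y-O(1)$ between $\widetilde g(ih)$ and $\widetilde g(iy)$, so $\sqrt r\,\log y-O(1)\le L\log(y/h)$, i.e.\ $L\ge\sqrt r$. Any proof must therefore use the long-range Lipschitz bound of $f$ along $\partial B$, which is where such a bound on $L$ comes from; your sketch never touches it.
\end{itemize}
The fallback is unavailable too. $Y/\rho(\Gamma_B)$ is not CAT$(0)$ (compare the flat-torus remark in the paper). In addition, Lang--Schroeder requires the domain's lower curvature bound to dominate the target's upper bound.

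\paragraph{Comparison with the paper.} The paper never builds horoball extensions by hand. It extends one $\Gamma$-orbit at a time, taking $g(p)$ to be the circumcenter of the admissible set $Y_p$, which is non-empty by Lang--Schroeder. Since the circumcenter depends only on metric data and $Y_{\gamma p}=\rho(\gamma)Y_p$, the choice is automatically equivariant. This canonical-selection idea is what your proposal lacks for equivariance. It does not, however, remove your obstacle:
\begin{itemize}
\item The circumcenter controls distances from the new orbit to $A$. It does not give the within-orbit condition $d_Y(c(Y_p),\rho(\gamma)c(Y_p))\le L\,d_{\bH}(p,\gamma p)$, which for $\gamma\in\Gamma_B$ is exactly your depth-$y$ displacement condition.
\item Theorem~A of Lang--Schroeder with domain $\bH$ needs a target of curvature $\le -1$ (after rescaling by $L$).
\end{itemize}

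\paragraph{Step~2 fails outright for targets with a flat.} For targets containing a flat (e.g.\ $\R^2$, or $\bH^r$ with $r\ge2$), your Step~2 as formulated, extending $\Gamma_B$-equivariant $L$-Lipschitz data from $\partial B$ into $B$, can fail. Three distinct points $a_i\in\partial B$ span a geodesic triangle whose interior lies in $B$. Suppose the boundary data map them isometrically into a flat, with images $A_i$. Such data exist, e.g.\ a suitable periodic loop in $\R^2$ for trivial $\rho$. Then no value is admissible at an interior point $q$:
\begin{itemize}
\item Place $\tilde a_i\in\R^2$ at distance $d_{\bH}(q,a_i)$ from $\tilde q$, at the hyperbolic angles seen from $q$.
\item Then $\tilde q=\sum_i\lambda_i\tilde a_i$ with all $\lambda_i>0$, and $\abs{\tilde a_i-\tilde a_j}<\abs{A_i-A_j}$ for all $i\ne j$.
\item The identity $\sum_i\lambda_i\abs{X-A_i}^2=\abs{X-\sum_i\lambda_iA_i}^2+\tfrac12\sum_{i,j}\lambda_i\lambda_j\abs{A_i-A_j}^2$ then gives $\sum_i\lambda_i\abs{Q-A_i}^2>\sum_i\lambda_i d_{\bH}(q,a_i)^2$ for every $Q$ in the flat.
\item It suffices to consider $Q$ in the flat, since projecting onto the flat does not increase distances to the $A_i$.
\end{itemize}
So Step~2 needs a stronger curvature hypothesis on $Y$ than CAT$(0)$, not merely a sharper estimate.
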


\begin{proof}
The hyperbolic plane $\bH$ has Alexandrov curvature $\equiv-1$, and by
hypothesis $Y$ has Alexandrov curvature $\leq\kappa\leq0$ globally 
 This global hypothesis on $Y$ itself, as opposed to a space only
locally modeled on such a $Y$, is essential, see the discussion below.
By the generalized Kirszbraun theorem of Lang-Schroeder
\cite[Theorem A]{LangSchr97}, $f$ extends from the closed set
$\widetilde\Omega\subset\bH$ to some Lipschitz map $\bH\to Y$ with
the same Lipschitz constant; we now make the construction equivariant.

We build the extension one point at a time, following the standard proof
of \cite[Theorem A]{LangSchr97}: given a $\Gamma$-invariant closed set
$A\supseteq\widetilde\Omega$ and a Lipschitz, $\rho$-equivariant
$g:A\to Y$ extending $f$, and a point $p\notin A$, the set of admissible
values
\[
Y_p=\bigl\{y\in Y : d_Y(y,g(a))\leq d_\bH(p,a)\ \text{ for all }a\in A\bigr\}
\]
is non-empty - this is precisely the content of \cite[Theorem A]{LangSchr97},
applied to $A\cup\{p\}$ - and bounded, being contained in the single ball
$\bar B(g(a_0),d_\bH(p,a_0))$ for any fixed $a_0\in A$. A bounded closed
subset of a complete space of curvature $\leq\kappa\leq0$ has a unique
circumcenter \cite[Proposition II.2.7]{BridsonHaefliger99}: the center of
the unique smallest closed ball containing it. Define $g(p):=c(Y_p)$, the
circumcenter of $Y_p$. Since the circumcenter is characterized purely by
the metric data of $Y_p$, any isometry of $Y$ carrying $Y_p$ to
$Y_{p'}$ carries $c(Y_p)$ to $c(Y_{p'})$.
In particular, for $\gamma\in\Gamma$: since $A$ is $\Gamma$-invariant and
$g|_A$ is $\rho$-equivariant, $Y_{\gamma p}=\rho(\gamma)\cdot Y_p$ (the
defining inequalities for $Y_{\gamma p}$ are exactly those for $Y_p$,
transported by the isometry pair $(\gamma,\rho(\gamma))$), so
$c(Y_{\gamma p})=\rho(\gamma)c(Y_p)$. Setting $g(\gamma p):=\rho(\gamma)g(p)$
for every $\gamma\in\Gamma$ therefore extends $g$ equivariantly and
Lipschitz-consistently to $A\cup\Gamma p$ (equivariance forces the same
value at every point of the orbit $\Gamma p$ that $p$ itself receives, so
this is the only possible choice, not only a convenient one). Iterating
over $\Gamma$-orbits of points of $\bH\setminus\widetilde\Omega$
-transfinitely, exhausting $\bH$, exactly as in the non-equivariant proof
of \cite[Theorem A]{LangSchr97} - produces the equivariant extension
$\widetilde g:\bH\to Y$ of the lemma, with $\mathrm{Lip}(\widetilde g)=\mathrm{Lip}(f)$
by construction at each step. It descends to $g$ on the quotients as
claimed, with $\mathrm{Lip}(g)=\mathrm{Lip}(\widetilde g)$ since the
covering maps $\bH\to S$ and $Y\to\rho(\Gamma)\backslash Y$ are local
isometries.
\end{proof}

We apply Lemma \ref{lemkirszbraun-r} with $Y=\bH^r$, not with the
quotient $X=\Delta\backslash(\pm\bH)^r$ itself. This distinction is the
point: $\bH^r$, with the product metric, is a Hadamard manifold
(a product of such), hence has Alexandrov curvature in $[-1,0]$
 globally, as the lemma requires. The quotient $X$ is only
 locally isometric to $\bH^r$, and locally modeling a nonpositively
curved space does not imply the global comparison inequality that
Alexandrov curvature bounds assert: the flat torus is locally isometric
to $\R^2$ (curvature $\equiv0$) yet is not globally $\mathrm{CAT}(0)$, as
its geodesics are not unique. The same caution applies to $X$, a quotient
of the Hadamard manifold $(\pm\bH)^r$ by the discrete group $\Delta$.
Working on the universal cover $\bH^r$ instead sidesteps this entirely,
at the cost of carrying $\rho$-equivariance through the extension, which
the proof above supplies. $\rho(\Gamma)=\Delta$ acts properly
discontinuously on $\bH^r$ by construction, so $\rho(\Gamma)\backslash Y=X$.
\subsection{Displacement estimates for the non-cocompact case}
\begin{lemma}\label{lemnoncompact-displ}
Let $\Gamma$ be a non-cocompact semi-arithmetic Fuchsian group of arithmetic 
dimension $r\geq 2$ admitting a generalized modular embedding. 
Then there exists $0<\delta = \delta(\Gamma)<1$ such that, for all hyperbolic 
$\gamma\in\Gamma^{(2)}$ 
\[
\sum_{i=2}^r\ell(\gamma^{\sigma_i})^2\leq (r-1)(1-\delta)^2\,\ell(\gamma)^2.
\]
\end{lemma}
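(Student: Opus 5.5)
The plan is to reduce to the cocompact argument of Lemma \ref{lemmulti-SP} by working on the compact core, and then to handle closed geodesics that enter the cusps. Let $\Omega\subset S$ be a compact core with preimage $\widetilde\Omega\subset\bH$. On $\widetilde\Omega$ the function
\[
z\mapsto\norm{DF_z}_{\mathrm{op}}^2=\sum_{i=1}^{r-1}\abs{D(F_i)_z}^2
\]
is continuous and $\Gamma^{(2)}$-invariant, by the equivariance \eqref{eqequiv-gen}. It therefore attains its supremum on the compact set $\Omega$.

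The first step repeats part 1) of the proof of Lemma \ref{lemmulti-SP} verbatim. If the supremum equalled $\sqrt{r-1}$, every $F_i$ would have unit hyperbolic derivative at some point. By the pointwise equality case of Schwarz-Pick, each $F_i$ would then be an isometry, and the Bergeron-Clozel argument would force $\Gamma$ to be arithmetic. This is impossible since $r\geq2$. We thus obtain $\delta_\Omega>0$ with
\[
\norm{DF_z}_{\mathrm{op}}\leq\sqrt{r-1}\,(1-\delta_\Omega)\quad\text{for } z\in\widetilde\Omega.
\]
Consequently $F|_{\widetilde\Omega}$, viewed as $f=\widetilde G|_{\widetilde\Omega}$ or simply as the $\bH^{r-1}$-part, is Lipschitz along paths inside $\widetilde\Omega$ with constant $\sqrt{r-1}(1-\delta_\Omega)$ componentwise in the $\ell^2$ sense.

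The second step applies Lemma \ref{lemkirszbraun-r} with $Y=\bH^{r-1}$, acted on by $\rho=(\sigma_2,\ldots,\sigma_r)$. This yields an equivariant Lipschitz map $\widetilde g:\bH\to\bH^{r-1}$ extending $F|_{\widetilde\Omega}$, with the same constant $L=\sqrt{r-1}(1-\delta_\Omega)$, measured in the product metric $d_{r-1}$. Here one needs the restriction to be Lipschitz for the ambient distance $d_\bH$ and not only for the path metric of $\widetilde\Omega$. I would handle this by shrinking to a horoball-complement whose lifts are convex, or else by accepting a slightly worse constant and renaming $\delta$.

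The third step estimates displacement. Every hyperbolic $\gamma\in\Gamma^{(2)}$ has its axis projecting to a closed geodesic, and closed geodesics on a finite-area surface avoid sufficiently deep cusp neighbourhoods. Choosing the core large enough, we may take $z_0$ on the axis with $z_0\in\widetilde\Omega$. Using the product displacement and equivariance of $\widetilde g$, we get
\[
\Bigl(\sum_{i=2}^r\ell(\gamma^{\sigma_i})^2\Bigr)^{1/2}\leq d_{r-1}\bigl(\widetilde g(z_0),\rho(\gamma)\widetilde g(z_0)\bigr)=d_{r-1}\bigl(\widetilde g(z_0),\widetilde g(\gamma z_0)\bigr)\leq L\,\ell(\gamma).
\]
The first inequality holds because the translation length of each factor is at most its displacement at any point; in the elliptic case we use the convention $\ell=0$. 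Squaring gives the statement with $\delta=\delta_\Omega$.

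The main obstacle is the second step, namely obtaining a genuine Lipschitz bound in $d_\bH$ on $\widetilde\Omega$ from the pointwise derivative bound. Geodesics between points of $\widetilde\Omega$ may leave $\widetilde\Omega$ into horoballs, where $\norm{DF}_{\mathrm{op}}$ is only bounded by $\sqrt{r-1}$. I would first try to use convexity, taking $\widetilde\Omega$ to be the complement of disjoint open horoballs. Then I would bound the excursion: a geodesic segment entering a horoball spends length there which is compensated, since the horocyclic path is not much longer, or more crudely since the segment's portion in a horoball is short relative to the total when the endpoints lie on closed-geodesic axes. If that fails, I would fall back on integrating $\norm{DF}$ directly along the axis segment $[z_0,\gamma z_0]$. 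This segment lies in $\widetilde\Omega$ for a sufficiently large core, and it gives the bound without needing Kirszbraun at all.
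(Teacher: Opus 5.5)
Your outline follows the paper's own route: a contraction constant $\delta_\Omega$ on the compact core, an equivariant Lang--Schroeder extension, then a displacement comparison. Extending $F$ into $\bH^{r-1}$ and reading off displacements directly, instead of extending $z\mapsto(z,F(z))$ into $\bH^r$ and comparing lengths in free homotopy classes of $X$, is only a cosmetic difference.

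The step you flag as the main obstacle is a genuine gap, and none of your repairs closes it. The paper's proof simply asserts, at the same point, that $\widetilde G|_{\widetilde\Omega}$ is $\Lambda$-Lipschitz. Let $p\in\Gamma^{(2)}$ be parabolic (the square of a parabolic of $\Gamma$) and $z\in\widetilde\Omega$. Then $p^nz\in\widetilde\Omega$ and $d_\bH(z,p^nz)=2\log n+O(1)$. Each $p^{\sigma_i}$ is again parabolic (nontrivial, with trace $\sigma_i(2)=2$). So for \emph{any} $\rho$-equivariant $g:\bH\to\bH^{r-1}$, the distance $d_{r-1}(g(z),g(p^nz))$ is the displacement of $\rho(p)^n$ at $g(z)$, and each of its $r-1$ coordinates is $2\log n+O(1)$.

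Hence the restriction of an equivariant map to any nonempty $\Gamma$-invariant set has Lipschitz constant at least $\sqrt{r-1}$. This applies in particular to $F|_{\widetilde\Omega}$ and to any Kirszbraun extension of it. Convexity cannot help: a horoball complement is never convex, since the chord between two points of a horocycle enters the horoball. Nor is there a ``slightly worse constant'' below $\sqrt{r-1}$ to rename.

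Your fallback fails too. Simple closed geodesics avoid a fixed cusp neighbourhood, but non-simple ones go arbitrarily deep, so no fixed core contains all axes. Letting the core grow with $\gamma$ destroys uniformity: the same obstruction forces $\sup_{\bH}\norm{DF_z}_{\mathrm{op}}=\sqrt{r-1}$, so $\delta_\Omega\to0$ along any exhaustion.

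In fact the inequality itself is false as soon as $\Gamma$ has a cusp, so no argument can prove the lemma as stated. Lift $p$ with trace $2$, choose a hyperbolic $\beta\in\Gamma^{(2)}$ not fixing the fixed point of $p$, and set $\gamma_n=p^n\beta$, $t_n=\tr\gamma_n$. The identity $\tr(AB)+\tr(A^{-1}B)=\tr A\,\tr B$ gives $t_{n+1}+t_{n-1}=2t_n$. Hence $t_n=t_0+n(t_1-t_0)$, with $t_1-t_0\in K$ nonzero: it equals $hc$ when $p=\begin{pmatrix}1&h\\0&1\end{pmatrix}$ and $c\neq0$ is the lower-left entry of $\beta$.

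Applying $\sigma_i$ gives $\sigma_i(t_n)=\sigma_i(t_0)+n\,\sigma_i(t_1-t_0)$ with $\sigma_i(t_1-t_0)\neq0$. So $\ell(\gamma_n)$ and every $\ell(\gamma_n^{\sigma_i})$ equal $2\log n+O(1)$, and $\sum_{i=2}^r\ell(\gamma_n^{\sigma_i})^2/\ell(\gamma_n)^2\to r-1$. No uniform $\delta>0$ exists. The axes of the $\gamma_n$ reach height of order $n$ in the cusp, which is also where your step-three claim breaks.

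What your fallback honestly gives, integrating $\norm{DF}_{\mathrm{op}}$ along $[z_0,\gamma z_0]$ with the Schwarz--Pick bound $\sqrt{r-1}$ off the core, is
\[
\Bigl(\sum_{i=2}^r\ell(\gamma^{\sigma_i})^2\Bigr)^{1/2}\leq\sqrt{r-1}\,\bigl(\ell(\gamma)-\delta_\Omega\,\ell_\Omega(\gamma)\bigr),
\]
where $\ell_\Omega(\gamma)$ is the length of the closed geodesic inside $\Omega$. This is uniform only for geodesics spending a definite fraction of their length in the core. Any downstream use would need a separate count of closed geodesics with long cusp excursions.
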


\begin{proof}
We follow the argument of Lemma 3.6 in \cite{BCDT25mult} and generalize it
to the product $\bH^{r-1}$.

\medskip
\noindent{1) Torsion-free case.}
Assume $\Gamma$ is torsion-free and derived from a quaternion algebra.
Let $S=\Gamma\backslash\bH$, $\Omega\subset S$ its compact core,
and $F=(F_1,\ldots,F_{r-1}):\bH\to(\pm\bH)^{r-1}$ the generalized
modular embedding.

Define the lattice $\Delta<\SL(2,\R)^r$ with
$\rho(\gamma)=(\gamma,\gamma^{\sigma_2},\ldots,\gamma^{\sigma_r})\in\Delta$
for all $\gamma\in\Gamma$.
Let $X=\Delta\backslash(\pm\bH)^r$, which is locally isometric to
$\bH^r$ with the product metric.

The map $G:S\to X$ defined by $G(\Gamma z)=\Delta(z,F_1(z),\ldots,F_{r-1}(z))$
is well-defined by the equivariance \eqref{eqequiv-gen}, and lifts to the
equivariant map $\widetilde G:\bH\to\bH^r$,
$\widetilde G(z)=(z,F_1(z),\ldots,F_{r-1}(z))$, satisfying
$\widetilde G(\gamma\cdot z)=\rho(\gamma)\widetilde G(z)$ for all
$\gamma\in\Gamma$.
Its differential satisfies
\[
\norm{D\widetilde G_z}_{\mathrm{op}}^2=1+\norm{DF_z}_{\mathrm{op}}^2
\leq 1+(r-1)(1-\delta_{\Omega})^2, 
\]
for all $z\in\widetilde\Omega$, the preimage of $\Omega$ in $\bH$, where
$\delta_\Omega > 0$ is the Schwarz-Pick contraction on the compact core.
Since the Kobayashi metric on $(\pm\bH)^{r-1}$ is the product metric,
and $F$ is a proper contraction, not an isometry, on the fundamental domain,
$\delta_\Omega>0$ by compactness.

Set $\Lambda = \sqrt{1+(r-1)(1-\delta_\Omega)^2}<\sqrt{r}$, so that
$\widetilde G|_{\widetilde\Omega}$ is $\Lambda$-Lipschitz and
$\rho$-equivariant. By Lemma \ref{lemkirszbraun-r}, applied with
$Y=\bH^r$ (as discussed above - not the quotient $X$), there is a
$\Lambda$-Lipschitz, $\rho$-equivariant extension
$\widetilde G_1:\bH\to\bH^r$ of $\widetilde G|_{\widetilde\Omega}$,
descending to a $\Lambda$-Lipschitz map $G_1:S\to X$ with
$G_1|_\Omega=G|_\Omega$. No a priori Lipschitz bound on $G$ away from
$\Omega$ is assumed or needed: the hypothesis of
Lemma \ref{lemkirszbraun-r} only requires $\widetilde G$ Lipschitz on
$\widetilde\Omega$ itself, exactly what is verified above.

For any hyperbolic $\gamma\in\Gamma$, let $\bar\gamma\subset S$ be the
corresponding closed geodesic of length $\ell = \ell(\gamma)$.
The image $G(\bar\gamma)$ is freely homotopic in $X$ to the projection
of the axis of $\rho(\gamma)\in\Delta$, which has length
$\sqrt{\ell^2+\sum_{i=2}^r\ell_i^2}$ where $\ell_i=\ell(\gamma^{\sigma_i})$.

Since $\Omega$ is a compact core, the inclusion $\Omega\hookrightarrow S$ is
a deformation retract: there is a homotopy $r_t:S\to S$ with $r_0=\id_S$,
$r_t|_\Omega=\id_\Omega$ for all $t$, and $r_1(S)=\Omega$ (this is standard
for finite-area hyperbolic surfaces: each cusp neighbourhood is an annulus
retracting onto its horocyclic boundary component). Since $G$ and $G_1$
agree on $\Omega$, precomposing with $r_t$ gives
$G\simeq G\circ r_1=G_1\circ r_1\simeq G_1$ as maps $S\to X$; in particular
$G_1(\bar\gamma)$ is freely homotopic to $G(\bar\gamma)$ in $X$ for every
closed curve $\bar\gamma\subset S$, regardless of how much of $\bar\gamma$
lies outside $\Omega$. This is the point of invoking
Lemma \ref{lemkirszbraun-r}: it produces a map $G_1$ that is
$\Lambda$-Lipschitz on all of $S$ from data on the compact core alone,
so no control on $\norm{DF_z}_{\mathrm{op}}$ near the cusps is needed.
We stress that the retraction $r_t$ is used only to identify the free
homotopy classes of $G(\bar\gamma)$ and $G_1(\bar\gamma)$ in $X$. It plays
no role in any length estimate and in particular cannot artificially
stretch or shrink the infimum length in that class, since that infimum is
an intrinsic invariant of $X$ and of the free homotopy class alone, not of
any particular representative curve or homotopy exhibiting it. The only
metric input is that $G_1$ itself is $\Lambda$-Lipschitz, which bounds the
length of the specific representative $G_1(\bar\gamma)$ and hence bounds
the infimum from above.
Since $\pi_1$-homotopy classes control the infimum of lengths of curves in
$X$, and since $G_1$ is $\Lambda$-Lipschitz, the length of $G_1(\bar\gamma)$
is at most $\Lambda\ell$.
The displacement of $\rho(\gamma)$ in $X$ is the infimum length in the
homotopy class, thus, 
\[
\sqrt{\ell^2+\sum_{i=2}^r\ell_i^2}\leq\Lambda\ell,
\]
giving
\[
\sum_{i=2}^r\ell_i^2\leq (\Lambda^2-1)\ell^2=(r-1)(1-\delta_\Omega)^2\ell^2.
\]
Setting $\delta=\delta_\Omega$ completes 1).

\medskip
\noindent{\it 2) Torsion case.}
The argument from 2) of \cite[Lemma 3.6]{BCDT25mult} extends directly.
Fix a (not necessarily normal) torsion-free finite-index subgroup
$\Gamma_1<\Gamma^{(2)}$, no normality is needed. If $\Gamma_1$ has
index $m$, a pigeonhole argument on its $m$ cosets shows that for every
$\gamma\in\Gamma^{(2)}$ some positive power $\gamma^j$ ($j\leq m$,
possibly depending on $\gamma$) lies in $\Gamma_1$, which is all that is
used below. Applying part 1) to $\gamma^j\in\Gamma_1$, and using
$\ell(\gamma^j)=j\,\ell(\gamma)$,
$(\gamma^j)^{\sigma_i}=(\gamma^{\sigma_i})^j$, and
$\ell\bigl((\gamma^{\sigma_i})^j\bigr)=j\,\ell(\gamma^{\sigma_i})$ (both
translation-length identities hold exactly for powers of a hyperbolic
isometry, and both sides vanish by convention if $\gamma^{\sigma_i}$ is
elliptic), the displacement bound for $\Gamma_1$ gives
$\sum_i j^2\ell(\gamma^{\sigma_i})^2\leq(r-1)(1-\delta)^2j^2\ell(\gamma)^2$;
dividing by $j^2$ gives the displacement bound for $\gamma$ itself, for
every $\gamma\in\Gamma^{(2)}$.
\end{proof}

\begin{proof}[Proof of Theorem \ref{thmmain-noncompact}]
If $r=1$, $\Gamma$ is arithmetic and EGMM again follows from
\cite{LuoSarnak94,Bol93}, this time using the non-cocompact bounded
clustering property of Geninska-Leuzinger \cite{GenLeu08}. Assume $r\geq 2$.
By Lemma \ref{lemnoncompact-displ}, the displacement bound
$\sum_{i=2}^r\ell(\gamma^{\sigma_i})^2\leq(r-1)(1-\delta_\Omega)^2\ell(\gamma)^2$
holds for all hyperbolic $\gamma\in\Gamma^{(2)}$, exactly as
\eqref{eqsum-displ} does in the cocompact case, with $\delta_\Omega$ in
place of $\delta$. Consequently Lemma \ref{lemgon}, Corollary
\ref{corcount-gen}, and Corollary \ref{corcount-sharp} all hold for
$\Gamma$ with $\delta=\delta_\Omega$, and parts 1)-4) of the proof of
Theorem \ref{thmmain-compact} apply  with $\delta_\Omega$ in
place of $\delta$, using in part 4) that the prime geodesic theorem
\eqref{eqpgt}, i.e., $\cN(\ell)\sim e^\ell/\ell$, also holds with no change
of statement for any Fuchsian group of finite covolume, cocompact or not. 
 The only hypothesis needed is finite covolume, and cusps do not affect
the leading-order asymptotic - by \cite[Theorem 3.4]{Sarnak80}. In particular $\Gamma$ has EGMM whenever
$r\leq 2$, or $r\geq 3$ and $\delta_\Omega$ satisfies the strong
contraction condition \eqref{eqstrong-contraction}.
\end{proof}
\section{Examples for arithmetic dimension $r \geq 3$}
\label{secexamples}
We describe families of semi-arithmetic Fuchsian groups of arithmetic dimension
$r\geq 3$ admitting generalized modular embeddings, to which
Corollary \ref{corunified} applies.
We emphasize at the outset that for none of the families below do we
verify condition \eqref{eqstrong-contraction-gen}: doing so would require
computing, or rigorously bounding, both the Schwarz-Pick contraction
constant $\delta(\Gamma)$ of the specific generalized modular embedding in
question and the degree $k(\Gamma)\in\{1,2\}$ of Lemma \ref{lemgon-general},
neither of which is available for any $r\geq 3$ example known to us. It
therefore remains open whether the class of semi-arithmetic groups of
arithmetic dimension $r\geq 3$ satisfying \eqref{eqstrong-contraction-gen}
is non-empty. The families below are natural candidates, together with a
precise description of what verifying the hypothesis on each would entail,
rather than verified instances of Theorem \ref{thmmain-compact} or
\ref{thmmain-noncompact}.

\medskip
\noindent{\it Why an explicit example is currently out of reach.}
It is worth stressing that even the foundational papers
\cite{BCDT25geom,BCDT25mult} on which this one builds never compute
$\delta(\Gamma)$ explicitly for any of their examples, and do not need to:
for $r\leq 2$ the strong contraction condition reduces to the bare
inequality $\delta(\Gamma)>0$, which follows abstractly from the
Schwarz-Pick lemma together with the Bergeron-Clozel non-totally-geodesic
argument (case (iii) of \cite[Section 2.2]{BCDT25mult}, generalized in
part 1) of the proof of Lemma \ref{lemmulti-SP} above), without ever
pinning down the size of $\delta$. Both families of examples in
\cite{BCDT25mult} - the $164$ triangle groups of arithmetic dimension $2$,
and McMullen's Veech groups $\Gamma_d$ with trace field $\Q(\sqrt d)$ - are
treated exactly this way: existence of some $\delta>0$ suffices, and
is all that is proved. Producing a numerical lower bound on $\delta(\Gamma)$
for a specific $\Gamma$ would instead require an explicit handle on the
generalized modular embedding $F$ itself, for instance an explicit period
map or uniformization of the relevant Hilbert modular curve or
Teichm\"uller curve, which is addressed by neither
\cite{BCDT25geom,BCDT25mult} nor the present paper, and appears to require
substantially new input. We also note that \cite{BCDT25mult} explicitly
 mentiones that generic Fuchsian groups are expected to have bounded 
mean multiplicity (citing \cite[p.\ 247]{Bog97}), so it is not a priori
clear that the strong contraction condition, or even plain EGMM, 
should hold for a ``typical'' semi-arithmetic group of dimension $r\geq 3$. 
 The families below are the most structured, least generic candidates
available, which is why we regard them, rather than an arbitrary
semi-arithmetic group, as the natural place to look for a witness.
\subsection{Hilbert modular groups and totally geodesic curves}
Let $F$ be a totally real number field of degree $d\geq 3$ and
$\mathcal{O}_F$ its ring of integers.
The Hilbert modular group $\SL(2,\cO_F)$ is an arithmetic group acting on
$\bH^d$ via the $d$ real embeddings of $F$.
Any totally geodesic curve $C\hookrightarrow X_F=\SL(2,\cO_F)\backslash\bH^d$
corresponds to a Fuchsian group $\Gamma_C$ and a holomorphic immersion
$\phi:\bH\hookrightarrow\bH^d$.
If $\phi$ is equivariant and corresponds to a proper embedding of $\bH$
into $\bH^d$, not the diagonal, then $\Gamma_C$ is a semi-arithmetic
Fuchsian group with arithmetic dimension $r\leq d$ and admits a generalized
modular embedding of rank $r$.

Such curves were studied in \cite{McMullen23}; for specific choices of $F$ one
obtains groups of arithmetic dimension $3,4,\ldots,d$.
By Corollary \ref{corunified}, all such curves have EGMM once they satisfy
condition \eqref{eqstrong-contraction-gen}, which is automatic only among
the $r\leq 2$, $k=1$ members of this family.

\begin{example}
\label{excubic} 
Take $F=\Q(\cos(2\pi/9))$, a totally real cubic field of degree $3$. 
The Hilbert modular surface $\SL(2,\cO_F)\backslash\bH^3$ contains 
totally geodesic holomorphic curves associated to certain triangle groups 
\cite{Takeuchi77,BCDT25geom}.
Any such curve $\Gamma_C$ has $r(\Gamma_C)=3$ and admits a generalized 
modular embedding $F:\bH\to\bH^2$. 
By Theorem \ref{thmmain-compact}, cocompact case, or 
Theorem \ref{thmmain-noncompact}, if non-cocompact, $\Gamma_C$ has EGMM
provided it is derived from a quaternion algebra and satisfies the strong
contraction condition \eqref{eqstrong-contraction} (or, in general,
condition \eqref{eqstrong-contraction-gen}). Verifying this would require
computing $\delta(\Gamma_C)$ from the explicit uniformization of this
Hilbert modular curve, and determining $k(\Gamma_C)\in\{1,2\}$; since
$r(\Gamma_C)=3$, the resulting hypothesis on $\delta(\Gamma_C)$ is
nontrivial, and we do not carry out either computation here.

We attempted a numerical estimate of $\delta$ for the closely related
triangle group $\Delta(2,3,19)$ of Example \ref{exconcrete} below, via the
classical Schwarz triangle function attached to its three conjugate
angle-triples, but the resulting hyperbolic-derivative ratios marginally
exceeded the Schwarz-Pick bound of $1$ at several sample points, a
contradiction, since no genuine holomorphic self-map of $\bH$ can do
this. This points to an unresolved normalization mismatch between the
abstract equivariant map $F$ of Definition \ref{defgen-modular} and the
classical Schwarz map's own domain normalization, rather than to a
mathematical error in the underlying bound. We do not report a numerical
value of $\delta$ here, to avoid presenting an artifact of that mismatch
as a genuine estimate.
\end{example}

\begin{example}\label{exquintic} 
The totally real field $\Q(\cos(2\pi/11))$ has degree $5$. 
The corresponding Hilbert modular variety contains totally geodesic curves 
of arithmetic dimension up to $5$.
Theorem \ref{thmmain-compact} or \ref{thmmain-noncompact} applies to all
such curves, giving EGMM for those among them - in particular all those
with $r\leq 2$ and $k=1$, that satisfy condition
\eqref{eqstrong-contraction-gen}. We do not know of any verified instance
among those with $r\geq 3$. For the curves of dimension $r=4$ or $5$ in
this family, Proposition \ref{propsharp} applies and the relevant
threshold on $\delta$ is the sharper $1-1/(\sqrt2(r-1))$ rather than
$1-(r-1)^{-3/2}$ (concretely $\delta>0.7643$ at $r=4$ rather than
$\delta>0.8075$, and $\delta>0.8232$ at $r=5$ rather than $\delta>0.875$), 
easier to satisfy, though still unverified for any specific curve here.
\end{example}
\subsection{Teichmuller curves in higher-dimensional strata}
McMullen's construction of Veech groups with $r=2$ uses $L$-shaped polygons
in $\cM_2$ \cite{McMullen03}.
Higher-arithmetic-dimension analogs arise from higher-genus flat surfaces.

Let $(X,\omega)$ be a Veech surface in the stratum $\cH(2g-2)$ whose
Veech group $\Gamma=\PSL(X,\omega)$ is a lattice in $\PSL(2,\R)$.
If the trace field $k\Gamma=K$ is a totally real field of degree $d$
and the Veech group admits a generalized modular embedding into $\bH^{r-1}$
for some $r\leq d$, then Corollary \ref{corunified} applies, subject to the
same strong contraction hypothesis (\eqref{eqstrong-contraction} or, in
general, \eqref{eqstrong-contraction-gen}) once $r\geq 3$. We do not verify
this hypothesis for any specific Veech group of dimension $r\geq 3$ here.

For certain families of square-tiled surfaces in genus $g\geq 3$, the trace
fields have degree $\geq 3$ and the groups have arithmetic dimension
$r\geq 3$; such examples arise from eigenform loci in
$\cH(1,1,\ldots,1)$ \cite{McMullen22}.
\subsection{Triangle groups of arithmetic dimension $\geq 3$}
By Nugent-Voight \cite{Nugent17} there exist finitely many triangle groups
of any given arithmetic dimension $r$.
For $r=3$, their paper gives a finite explicit list.
By Cohen-Wolfart \cite{CW90}, all triangle groups admit modular embeddings, 
necessarily of the appropriate rank, thus all triangle groups of arithmetic
dimension $r\geq 3$ admit a generalized modular embedding, so 
Corollary \ref{corunified} applies to each of them: such a group has EGMM
once it satisfies condition \eqref{eqstrong-contraction-gen}. Since
Nugent-Voight's list is finite and explicit for each $r$, this reduces the
question of \cite{BCDT25mult} for triangle groups of dimension $r\geq 3$ to
a finite computation, for each group on the list, of $\delta(\Gamma)$ and
of the degree $k(\Gamma)\in\{1,2\}$ of Lemma \ref{lemgon-general}; we leave
both computations for future work.

\begin{example}
\label{exconcrete}
To make this concrete, \cite[Section 6.1.3]{Nugent17} lists $111$ compact
and $13$ non-compact triples $(a,b,c)$ of arithmetic dimension exactly $3$;
the smallest compact one in their list is $(a,b,c)=(2,3,19)$. The
corresponding triangle group $\Delta(2,3,19)$ is fully explicit
semi-arithmetic Fuchsian group of arithmetic dimension $3$ admitting a
generalized modular embedding $F:\bH\to\bH^2$ (Cohen-Wolfart
\cite{CW90}), so Theorem \ref{thmmain-compact} applies to it: $\Delta(2,3,19)$ has EGMM once its Schwarz-Pick contraction
constant $\delta(\Delta(2,3,19))$ satisfies \eqref{eqstrong-contraction},
or, if $\Delta(2,3,19)$ is not derived from a quaternion algebra, the
stronger condition \eqref{eqstrong-contraction-gen}.

We can make the geometry underlying this hypothesis fully explicit, which
is a first concrete step toward verifying non-vacuousness. For a triangle
group $\Delta(a,b,c)$, the Galois conjugates of the representation are
indexed by $k\in(\Z/2m\Z)^\times$, $m=\operatorname{lcm}(a,b,c)$, with conjugate vertex
angles $\angle(k\pi/a),\angle(k\pi/b),\angle(k\pi/c)$, where
$\angle(\theta)=\arccos\abs{\cos\theta}$, and the conjugate is hyperbolic
exactly when
\[
\kappa(a,b,c;k):=1-\cos^2\!\tfrac{k\pi}{a}-\cos^2\!\tfrac{k\pi}{b}
-\cos^2\!\tfrac{k\pi}{c}-2\cos\tfrac{k\pi}{a}\cos\tfrac{k\pi}{b}\cos\tfrac{k\pi}{c}
\;<\;0
\]
(Takeuchi \cite{Takeuchi77}, Nugent-Voight \cite{Nugent17}). A direct
computation for $(a,b,c)=(2,3,19)$ (so $m=114$, working mod $228$) shows
that $\kappa(2,3,19;k)<0$ for exactly $24$ of the $72$ residues $k$ coprime
to $228$, and that these $24$ values of $k$ produce only three 
distinct conjugate angle-triples: $(\pi/2,\pi/3,\pi/19)$ (the identity,
$k=1$), $(\pi/2,\pi/3,2\pi/19)$, and $(\pi/2,\pi/3,3\pi/19)$ - confirming
directly, rather than only citing \cite{Nugent17}, that $\Delta(2,3,19)$
has exactly three unbounded embeddings, i.e., arithmetic dimension $3$.
The two non-trivial conjugate triangle groups, with angles
$(\pi/2,\pi/3,2\pi/19)$ and $(\pi/2,\pi/3,3\pi/19)$, are themselves
  hyperbolic triangle groups, and the two components $F_1,F_2$ of
the generalized modular embedding $F=(F_1,F_2)$ are, up to
post-composition by an isometry, exactly the classical Schwarz triangle
(hypergeometric) maps carrying the $(2,3,19)$-tessellation of $\bH$ to the
$(2,3,2\!\cdot\!19)$- and $(2,3,3\!\cdot\!19)$-tessellations respectively.

This identifies $F_1,F_2$ concretely, but does not by itself give
$\delta(\Delta(2,3,19))$: that requires bounding $\sup_{z\in\Omega}\abs{D(F_i)_z}$
over a compact fundamental domain $\Omega$ for $\Delta(2,3,19)$, which
in turn requires solving the connection problem for the hypergeometric
equations attached to these three angle-triples (equivalently, computing
the relevant hypergeometric connection coefficients, e.g., numerically),
to control $F_i$ globally rather than just at its three ramification
points. We regard identifying this precise, well-posed, and in principle
numerically tractable problem - rather than leaving $\delta(\Gamma)$
entirely unapproachable,  as
the concrete contribution of this example; carrying out the connection-problem
computation itself remains beyond the scope of the present paper.
\end{example}
\begin{remark}
For arithmetic dimension $r=2$, there are exactly $164$ triangle groups
Nugent-Voight \cite{Nugent17}, and Corollary \ref{corunified} recovers
the result of \cite{BCDT25mult}.
For $r=3$, our theorem shows that each of the finitely many triangle
groups on this list, including $\Delta(2,3,19)$ of Example \ref{exconcrete},
has EGMM once it satisfies condition
\eqref{eqstrong-contraction-gen}; verifying this condition, which
requires computing both $\delta(\Gamma)$ and $k(\Gamma)$, for each of
them would answer the open question of \cite{BCDT25mult} for this family,
but we do not carry out that verification here, and so cannot rule out
that the condition fails for every group on the list.
\end{remark}
\section{Quantum chaos}
\label{secQC}
\subsection{Implications for energy level statistics}
We briefly discuss the implications of Corollary \ref{corunified} for the
spectral statistics of quantized geodesic flows.

Let $S=\Gamma\backslash\bH$ be a hyperbolic orbifold with $\Gamma$ 
semi-arithmetic of arithmetic dimension $r$ admitting a generalized modular
embedding.
The quantum mechanics of a free particle on $S$ is described by the eigenvalues
$0=\lambda_0<\lambda_1\leq\lambda_2\leq\cdots$ of $-\hbar^2\Delta_S$.

The number variance $\Sigma^2(\lambda,L)$ measures fluctuations of the
number of eigenvalues in intervals of length $L$.
For the unfolded eigenvalue sequence, Poisson statistics give
$\Sigma^2(L)\sim L$, while random matrix theory gives $\Sigma^2(L)\sim\log L$.

By Corollary \ref{corunified}, $\langle g(\ell)\rangle\geq e^{C\ell}$ for
some $C>0$, for every $\Gamma$ satisfying its hypotheses - every
semi-arithmetic $\Gamma$ of dimension $r\leq 2$ admitting a generalized
modular embedding, and every such $\Gamma$ of dimension $r\geq 3$ that in
addition satisfies the strong contraction condition
\eqref{eqstrong-contraction}.
The physical model of Bolte \cite{Bol93_long} and Bogomolny et al. 
 \cite{Bog97} relates the mean multiplicity $\langle g(\ell)\rangle$ to
$\Sigma^2(\lambda,L)$:
for the Selberg trace formula, i.e., spectral and geometric sides, 
\[
\sum_je^{it\sqrt{\lambda_j-1/4}} 
=\frac{e^{t/2}}{4\pi}\sum_{\gamma\in\mathrm{hyp}}\frac{\ell_0(\gamma)}{2\sinh(\ell(\gamma)/2)}e^{it\ell(\gamma)},
\]
the large multiplicities $\langle g(\ell)\rangle\geq e^{C\ell}$ cause
oscillations in the spectral density that are consistent with Poisson statistics
in the range $L\ll\sqrt{\lambda}$. We stress that for $r\leq 2$ this
conclusion takes place, while for $r\geq 3$ it holds only for those
$\Gamma$ satisfying the strong contraction condition
\eqref{eqstrong-contraction}: the effective exponent gap between Poisson
and random-matrix behavior is governed by $C$, which in turn depends on
the geometric constant $\delta(\Gamma)$ that remains uncomputed, and hence
unverified, for every specific semi-arithmetic group of dimension
$r\geq 3$ known to us (Section \ref{secexamples}).

\begin{remark}\label{remexponent}
Under the strong contraction condition \eqref{eqstrong-contraction}
(equivalently, for $\Gamma$ derived from a quaternion algebra, $k=1$ in the
notation of Lemma \ref{lemgon-general}), the exponent $C=C(\Gamma,r)$ in
$\langle g(\ell)\rangle\geq e^{C\ell}$ satisfies 
\[
C\geq\frac{1}{2}\bigl(1-\sqrt2(r-1)(1-\delta(\Gamma))\bigr)>0,
\]
the right-hand side being positive exactly because
\eqref{eqstrong-contraction} holds.
This exponent decreases as $r$ increases, for fixed $\delta$, and increases 
as $\delta$ increases. 
For $r=2$, $C\geq\delta/2$ as in \cite{BCDT25mult}. 
 For general $r\geq3$ satisfying \eqref{eqstrong-contraction}, the exponent is
smaller but still positive.
This explains the numerical observation of Bogomolny-Schmit \cite{Bog04} 
that for groups with larger arithmetic dimension, the multiplicities are 
harder to detect computationally: the exponent $C$ is small, and for
groups failing \eqref{eqstrong-contraction} our method gives no positive
lower bound on $C$ at all.
For $\Gamma$ not derived from a quaternion algebra ($k=2$), the same
computation with $\sqrt2(r-1)$ replaced by $2\sqrt2(r-1)$ gives
$C\geq\tfrac12\bigl(1-2\sqrt2(r-1)(1-\delta(\Gamma))\bigr)$ under the
stronger condition \eqref{eqstrong-contraction-gen}; this is positive
under exactly the same proviso, and is smaller than the $k=1$ value for
the same $\delta$ and $r$.
\end{remark}
\subsection{Comparison with the arithmetic case}
For arithmetic surfaces, arithmetic dimension $r=1$, the bounded clustering
property \eqref{eqBC} gives $\langle g(\ell)\rangle\sim c\,e^{\ell/2}/\ell$,
the maximum possible exponent, $C=1/2$.

For semi-arithmetic surfaces of dimension $r\geq 2$ with a generalized
modular embedding, our result gives $\langle g(\ell)\rangle\geq e^{C\ell}$ with
$0<C<1/2$, the gap being determined by the Schwarz-Pick contraction $\delta(\Gamma)$.

This explains the physical observation \cite{Bog97} that non-arithmetic surfaces
with large multiplicities have Poisson-like statistics, but with a smaller
effective exponent than arithmetic surfaces.
\subsection{The Aurich-Marklof example in 3 dimensions}
Marklof \cite{Markl96} studied multiplicities in length spectra of arithmetic
hyperbolic 3-orbifolds.
The tetrahedron group $T_8$ studied by Aurich-Marklof \cite{AuMarkl96} has
integral traces and a trace field with two complex and four real places.
Although the notion of semi-arithmeticity in three dimensions is not fully
developed, the structure of $T_8$ (integral traces, real places, associated
quaternion algebra) suggests it is the analogue of a semi-arithmetic group in
the three-dimensional setting.
Our methods, developed for surfaces, point toward an analogous result for
$T_8$: if a suitable generalized modular embedding of rank $r\geq 2$ exists
(in the 3-dimensional sense, mapping $\bH^3$ to a product), then EGMM would
follow by our argument.
A rigorous treatment in the 3-dimensional case is postponed to a future paper.
\section{Concluding remarks}
\label{secconclusion}
We have identified the strong contraction condition
\eqref{eqstrong-contraction} as the geometric hypothesis governing
exponential growth of mean multiplicities in the geodesic length spectrum
of semi-arithmetic Fuchsian groups of arbitrary arithmetic dimension
admitting a generalized modular embedding. This condition holds
automatically for $r\leq 2$, recovering \cite{BCDT25mult} exactly, and for
$r\geq 3$ it is a checkable additional hypothesis under which
EGMM continues to hold.
The new methods introduced are 

{\it I. The multi-dimensional Schwarz-Pick contraction} for the 
generalized modular embedding $F:\bH\to\bH^{r-1}$, which gives a uniform 
strict contraction of the operator norm and bounds all non-identity conjugate 
displacements.

{\it II. The geometry-of-numbers norm-form estimate}, which converts 
the displacement bounds into an explicit power-law count
$\#(\cL(\Gamma)\cap[N-1,N])\leq CN^{(r-1)^{3/2}(1-\delta)}$
via the product formula for the algebraic norm $\NK$.

The combination gives $\cN'(\ell)\leq C'e^{(1-\varepsilon)\ell}$ for any
semi-arithmetic $\Gamma$ admitting a generalized modular embedding and
satisfying the strong contraction condition \eqref{eqstrong-contraction},
and EGMM follows from the prime geodesic theorem exactly as in the
classical case.

\medskip
\noindent Several natural questions remain open. 

\textit{Removing the strong contraction condition:}
Proposition \ref{propsharp} shows the exponent $(r-1)^{3/2}$ in
Lemma \ref{lemgon} can be sharpened to $\sqrt2(r-1)$ for $r\geq4$ by
exploiting the joint sum-of-squares constraint \eqref{eqsum-displ} on
both traces at once, rather than bounding each embedding's worst case
independently; it ties the original bound at $r=3$. Is
\eqref{eqstrong-contraction} (in its sharpened form) automatic for every
semi-arithmetic group of arithmetic dimension $r\geq 3$ derived from a
quaternion algebra and admitting a generalized modular embedding, as it
is for $r\leq 2$? Can the exponent be improved further still, to $(r-1)$
exactly, or to any exponent independent of $\delta$ that stays below $1$
for all $r$? A negative answer - an explicit semi-arithmetic group with a
generalized modular embedding that fails EGMM - would be equally
interesting, and would show the phenomenon depends on more than the 
existence of the embedding.

\textit{Non-vacuousness:} A logically prior question, distinct from the
one above, is simply whether any semi-arithmetic group of
arithmetic dimension $r\geq 3$ admitting a generalized modular embedding
satisfies the strong contraction condition \eqref{eqstrong-contraction} at
all. We have identified a fully explicit test case, $\Delta(2,3,19)$
(Example \ref{exconcrete}): its two non-trivial conjugate embeddings are
now pinned down concretely as the Schwarz triangle maps for angle-triples
$(\pi/2,\pi/3,2\pi/19)$ and $(\pi/2,\pi/3,3\pi/19)$, reducing
non-vacuousness for this specific group to the well-posed, in principle
numerically tractable problem of bounding these two hypergeometric maps'
derivatives over a compact fundamental domain. We have not solved this
connection problem, so we still do not exhibit a verified witness, and
neither the present techniques nor those of
\cite{BCDT25geom,BCDT25mult} give an explicit handle on $\delta(\Gamma)$
for any specific higher-dimensional example in general. Resolving this
in general would likely
require either an explicit computation of the generalized modular
embedding for a specific Hilbert modular curve or Teichm\"uller curve, or
a soft existence argument (e.g., a degeneration or genericity statement in
a suitable family) that avoids computing $\delta(\Gamma)$ exactly; we are
not aware of either at present.

\textit{Characterization of EGMM:}
Is the existence of a generalized modular embedding  {necessary} for EGMM? 
The converse is expected to hold for the full B-C property, by Sarnak's 
conjecture and Geninska-Leuzinger \cite{GenLeu08}, but the weaker EGMM 
condition might hold for a larger class of groups. 

\textit{Optimal exponent:}
What is the sharp value of $C=C(\Gamma,r)$ in  
$\langle g(\ell)\rangle\sim c\,e^{C\ell}/\ell$?  
For $r=1$ (arithmetic), $C=1/2$ and the formula 
$\langle g(\ell)\rangle\sim c\,e^{\ell/2}/\ell$ is known  
Aurich-Steiner \cite{AuSt88}, Bolte \cite{Bol93}. 
For $r\geq 2$, the sharp exponent depends on the arithmetic geometry of $\Gamma$. 

\textit{Non-cocompact Geninska-Leuzinger analogue:} 
Geninska-Leuzinger \cite{GenLeu08} proved that B-C characterizes 
arithmeticity in the non-cocompact case.
An analogue for EGMM and generalized modular embeddings would be desirable.

\textit{Three-dimensional generalization:}
Can the methods of this paper be extended to semi-arithmetic lattices in
$\PSL(2,\C)=\Isom^+(\bH^3)$?
The Aurich-Marklof example \cite{AuMarkl96} strongly suggests this should be true.

\textit{Groups without modular embedding:}
What can be said about semi-arithmetic groups of arithmetic dimension 
$r\geq 2$ that do  {not} admit a generalized modular embedding? 
The experimental evidence of Bogomolny-Schmit \cite{Bog04} suggests  
that EGMM might fail or be undetectable for such groups.

\medskip 
One might expect certain applications in theoretical and mathematical
physics, in particular in Wigner-Weyl calculus
\cite{chernodub2017scale, zhang2020influence}
and momentum space topological invariants 
\cite{zubkov2012momentum, zubkov2017topology, kmmzz}, 
and  isoperimetric-profiles \cite{lezu}, cohomology \cite{Zu2} 
and theory of foliations \cite{Zu100, Zu3, zu, Zu4}. 
The material of this paper is also useful in other areas of mathematical physics 
\cite{Frohlich2009gb, RSZ, kmmzz}.

\medskip
\noindent\textbf{Acknowledgments.}
The author thanks the authors of \cite{BCDT25mult} for raising the question
that motivates this work. 
The author is supported by the Institute of Mathematics, Academy of Sciences of the Czech  
	Republic (RVO 67985840). 

\medskip
\noindent\textbf{Data Availability.}
Data sharing is not applicable to this article as no datasets were generated
or analysed during the current study.

\medskip
\noindent\textbf{Declarations}

\medskip
\noindent\textbf{Conflict of interest.}
The author has no conflicts of interest to declare that are relevant to the
content of this article.

\end{document}